\def\norm#1{\left\|#1\right\|}
\newcommand{\VECTOR}[3]{\left(#1,\, #2,\, #3\right)\transp}
\newcommand{\VECCTOR}[6]{\left(#1,\, #2,\, #3,\, #4,\, #5,\, #6\right)\transp}
\newcommand{\R}{\mathbb R}
\newcommand{\N}{\mathbb N}
\newcommand{\mcU}{\mathcal{U}}
\newcommand{\ybar}{y_d}
\newcommand{\efeas}{\varepsilon_{feas}}
\newcommand{\Mt}{\tilde{M}}
\newcommand{\Kt}{\tilde{K}}
\newcommand{\Nt}{\tilde{N}}
\newcommand{\Phit}{\tilde{\Phi}}
\newcommand{\Cineq}{C_{\text{ineq}}}
\newcommand{\Svec}{S_{\text{vec}}}
\newcommand{\Mred}{M_{\text{red}}}
\newcommand{\Xred}{X_{\text{red}}}
\newcommand{\Wred}{W_{\text{red}}}
\newcommand{\Kred}{K_{\text{red}}}
\newcommand{\Cred}{C_{\text{red}}}
\newcommand{\Phired}{\Phi_{\text{red}}}
\newcommand{\yred}{y_{\text{red}}}
\newcommand{\hyred}{\hat{y}_{\text{red}}}
\newcommand{\Ktred}{\tilde{K}_{\text{red}}}
\newcommand{\Phitred}{\tilde{\Phi}_{\text{red}}}
\newcommand{\hC}{\hat{C}}
\newcommand{\fred}{f_{\text{red}}}
\newcommand{\mcA}{\mathcal{A}}
\newcommand{\mcB}{\mathcal{B}}
\newcommand{\mcC}{\mathcal{C}}
\newcommand{\mcD}{\mathcal{D}}
\newcommand{\mcM}{\mathcal{M}}
\newcommand{\mcP}{\mathcal{P}}
\newcommand{\mchA}{\hat{\mathcal{A}}}
\newcommand{\mchB}{\hat{\mathcal{B}}}
\newcommand{\mchC}{\hat{\mathcal{C}}}
\newcommand{\hy}{\hat{y}}
\newcommand{\Mobs}{M_{\text{obs}}}
\newcommand{\Oobs}{\Omega_{\text{obs}}}
\newcommand{\yout}{y_{\text{out}}}
\newcommand{\yredout}{y_{\text{red,out}}}
\newcommand{\mcN}{\mathcal{N}}
\newcommand{\mcNred}{\mathcal{N}_{\text{red}}}
\newcommand{\xtIPA}{x_{\text{tIPA}}}
\newcommand{\xMORtIPA}{x_{\text{MOR-tIPA}}}
\newcommand{\dt}{\delta_t}
\newcommand{\dx}[1][x]{\ensuremath{\, \mathrm{d} #1}} 	%
\newcommand{\Jt}{J}
\newcommand{\oO}{\overline{\Omega}}
\newcommand{\pmax}{p_{\max}}
\newcommand{\transp}{^{^{\scriptstyle\intercal}}} 
\newcommand{\BnB}{branch-and-bound\xspace}
\newcommand{\cplexmiqp}{{\tt cplexmiqp}\xspace}
\begin{document}

\title{An Improved Penalty Algorithm using Model Order Reduction for MIPDECO problems with partial observations} 

\author{Dominik Garmatter \and \\
        Margherita Porcelli \and
        Francesco Rinaldi \and
        Martin Stoll
}

\titlerunning{An IPA using MOR for MIPDECO problems with partial observations}

\authorrunning{D. Garmatter, M. Porcelli, F. Rinaldi and M. Stoll} 


\institute{Dominik Garmatter \and Martin Stoll \at
              Department of Mathematics, Chemnitz University of Technology, Germany \\
              \email{dominik.garmatter@math.tu-chemnitz.de, martin.stoll@math.tu-chemnitz.de}           
           \and
           Margherita Porcelli \at
              Department of Mathematics  \& AM$^2$, University of Bologna, Italy\\
              ISTI--CNR,  Italy \\
              \email{margherita.porcelli@unibo.it}          
            \and
           Francesco Rinaldi \at
              Department of Mathematics ``Tullio Levi-Civita", University of Padova, Italy\\
              \email{rinaldi@math.unipd.it}          
}

\date{Received: date / Accepted: date}
\maketitle

\begin{abstract}
This work addresses optimal control problems governed by a linear time-dependent partial differential equation (PDE)  as well as integer constraints on the control.
Moreover, partial observations are assumed in the objective function. 
The resulting problem poses several numerical challenges due to the mixture of combinatorial aspects, induced by integer variables, and large scale linear algebra issues, arising from the PDE discretization.
Since classical solution approaches such as the \BnB framework are typically overwhelmed by such large-scale problems, this work extends an improved penalty algorithm proposed by the authors, to the time-dependent setting.  The main contribution is a novel combination of an interior point method, preconditioning, and model order reduction yielding a tailored local optimization solver at the heart of the overall solution procedure. A thorough numerical investigation is carried out both for a Poisson problem as well as a convection-diffusion problem demonstrating the versatility of the approach.
\keywords{mixed integer optimization, PDE-constrained optimization, exact penalty methods, interior point methods, model order reduction}
\subclass{34C20\and 90C06 \and 90C11 \and 93C20 \and 90C51}
\end{abstract}


%

\section{Introduction}
\label{sec:1_MINLPpaper}
Optimal control problems with PDE constraints and additional integer (and possible other constraints) are usually referred to as mixed integer PDE-constrained optimization (MIPDECO) problems. Such problems arise in a variety of real world applications such as gas networks \cite{hahn2017mixed,Schewe_Martin_2015}, the placement of tidal and wind turbines \cite{FUNKE2014658,Zhang2014,wesselhoeft2017mixed} or power networks \cite{Goettlich2019}. Approximating their solution poses significant difficulties as being in the intersection of two fields: integer programming and PDEs. While integer optimization problems have an inherent combinatorial complexity that needs to be accounted for, PDE-constrained optimization problems have to deal with large-scale linear systems resulting from the discretization of the PDE, see, e.g., \cite{troltzsch2010optimal,OCBookHinze}.

A classical solution approach for a MIPDECO problem is to \emph{first-discretize-then-optimize}: the PDE and the control are discretized, thus resulting in the continuous MIPDECO problem being approximated by a large-scale finite-dimensional mixed-integer nonlinear programming problem (MINLP).
This approach was outlined in the previous work by the authors \cite{garmatter2019improved}, where it was also shown numerically that standard techniques such as \BnB, see, e.g., \cite{BnB_Masterpaper} for an overview, indeed struggle to solve the resulting MINLP in reasonable time (both the large amount of integer variables as well as the large PDE discretization are challenging here).

As a remedy, \cite{garmatter2019improved} introduced a novel \emph{improved penalty algorithm} (IPA) that repeatedly solves an equivalent continuous penalty reformulation of the original problem for an increasing penalty parameter, with the penalty reformulation being obtained by relaxing the integer constraints and adding a suitable penalty term to the objective function to avoid non-integer solutions.
The IPA was based on an exact penalty (EXP) algorithm \cite{Lucidi_2011} that provides a theoretical framework for when to increase the penalization and when to search for a better minimizer. The IPA deviated from the EXP algorithm by employing a probabilistic search approach to determine a new iterate. Such a search was closely connected to basin hopping or iterated local search methods, see, e.g., \cite{grosso2007population,leary2000global}. The upside of this change was that the IPA only relied on a local optimization solver, where a suitable interior point method (IPM) utilizing a tailored preconditioner for the Newton system was used in \cite{garmatter2019improved}. As a result, the IPA was able to provide either the global or a high quality local minimum for a Poisson as well as a convection-diffusion model problem.

This article focuses on extending the IPA developed in \cite{garmatter2019improved} to MIPDECO problems with a linear, time-dependent PDE constraint as well as partial observations in the objective function. In this case, the resulting discretized MINLP will definitely be of large scale. To overcome the inherent complexity of this problem, we approximate the PDE constraint using \emph{balanced truncation} model order reduction (MOR), see, e.g., \cite{antoulas2005approximation}, and then develop a suitable IPM for this \emph{reduced penalty formulation}. The IPM is again well-equipped for the problem as it:
\begin{itemize}
    \item explicitly handles the non-convexity introduced by the penalty term;
    \item incorporates a specific preconditioner to handle the linear algebra as well as the singularity due to the partial observation.
\end{itemize}
Embedding this IPM into the IPA framework then allows for the solution of large-scale MIPDECO problems and the resulting algorithm is numerically investigated, both for a time-dependent Poisson as well as a convection-diffusion problem.

While the use of MOR is standard in general optimization contexts, see, e.g., \cite{gubisch2017proper,de2011balanced,DH14b,antil2011domain}, MOR for MIPDECO problems is far less investigated, see \cite{freya2019pod} for a first result. Furthermore, applying preconditioning to a reduced system of equations has only been considered once \cite{elman2015preconditioning}, while \cite{singh2020preconditioned} considers preconditioning during the generation of reduced models.
To the knowledge of the authors, the combination of an IPM, MOR, and preconditioning has not been considered so far in the literature to handle MIPDECO problems.
 
Finally, other methods for MIPDECO problems such as Sum-up-Rounding strategies \cite{manns2018multi,LeyfferSUR},  derivative-free approaches \cite{larson2019method}, and sophisticated rounding techniques \cite{LeyfferINV}, might become too costly when adapted to tackle the large-scale problems considered in this article.

The paper is organized as follows: the time-dependent model problem, its discretization, as well as the equivalent penalty formulation are presented in Section \ref{sec:2_MINLPpaper}. Section \ref{sec:3_MINLPpaper} contains the MOR approach, including some theoretical aspects, and the interior point method. Section \ref{sec:4_MINLPpaper} reviews the IPA framework, adapts it to the time-dependent setting, and discusses different perturbation strategies for the probabilistic search approach. Section \ref{sec:5_MINLPpaper} contains the numerical investigation of the new algorithm and final conclusions are drawn in Section \ref{sec:6_MINLPpaper}.

\section{Problem formulation}
\label{sec:2_MINLPpaper}

We begin with the description of the optimal control model problem in function spaces. Following the first-discretize-then-optimize approach, we then present the discretized model problem, its continuous relaxation, and then move towards the penalty reformulation of the problem.

\subsection{Time-dependent binary optimal control problem}
\label{sec:2_1_MINLPpaper}

We begin with the description of the PDE in order to formulate the optimal control problem. Consider a bounded domain $\Omega\subset\R^2$ with Lipschitz boundary, the time interval $[0,T]$ with final time $T>0$, source functions $\phi_1,\dots ,\phi_l\in L^2(\Omega)$, and based on these the parabolic PDE: for a given control function $u: (0,T) \to \R^l : t\mapsto (u^{(1)}(t),\dots , u^{(l)}(t))\transp$ find the state $y\in L^2(0,T,H_0^1(\Omega))$ solving
\begin{align}
\label{eq:PDE_parabolicMINLPpaper}
\begin{split}
\frac{\partial}{\partial t} y(t,x) - \Delta y(t,x) = \sum_{i=1}^l u^{(i)}(t) \phi_i(x),\quad (t,x)\in(0,T)\times\Omega,\\
y(0,x) = 0,\quad x\in \oO,
\end{split}
\end{align}
where the PDE is to be understood in the weak sense.
Existence and uniqueness of a solution $y\in L^2(0,T,H_0^1(\Omega))$ of \eqref{eq:PDE_parabolicMINLPpaper} follow from the Lions-Lax-Milgram theorem.

For now, we choose to model the sources $\phi_1,\dots ,\phi_l$ as Gaussian functions with centers $\tilde{x}_1,\dots,\tilde{x}_l$ in the interior of $\Omega$. Thus, for $x\in\R^2$,
\begin{align}
\label{eq:gaussian_sources_parabolicMINLPpaper}
\phi_i(x) := \kappa e^{-\frac{\norm{x-\tilde{x}_i}_2^2}{\omega}}\quad\mbox{(Gaussian)},~ i=1,\dots, l,
\end{align}
with height $\kappa >0$ and width $\omega>0$, and we will provide further details in Section \ref{sec:5_MINLPpaper}. 
Introducing the space of binary control functions in time 
\[
\mcU := \{u\in L^\infty((0,T)\times \R^l) \mid u:(0,T) \to \{0,1\}^l\},
\]
the optimal control problem in function spaces then reads: given a desired state $\ybar\in L^2((0,T)\times\Omega)$, find a solution pair $(y,u)\in L^2(0,T,H_0^1(\Omega))\times \mcU$ of 
\begin{align}
\begin{array}{cl}
\label{eq:MINLP_cont_parabolicMINLPpaper}
\displaystyle
\min_{\substack{y\in L^2(0,T,H_0^1(\Omega))\\ u\in\mcU}} & \frac{1}{2}\int_0^T \int_{\Oobs} (y-\ybar)^2 \dx \dx[t],
\\
\mbox{s.t.} & (y,u) \mbox{ fulfill } \eqref{eq:PDE_parabolicMINLPpaper},\mbox{ and }\sum_{i=1}^l u^i(t) \leq S\in\N ,\, \forall t\in(0,T),
\end{array}
\end{align}
where $\Oobs\subset\Omega$ is our domain of observation and the inequality constraint in \eqref{eq:MINLP_cont_parabolicMINLPpaper} is commonly referred to as a \emph{knapsack constraint}.
This problem can be interpreted as fitting a desired heating pattern $\ybar$ over a domain of observation $\Oobs$ by activating up to $S\in\N$ many sources at each point in time where the sources are distributed over $\Omega$. 
Clearly, as soon as the control is suitably discretized such that the discretized feasible set only contains finitely many controls (and since for each control there is a uniquely determined state $y$), this discretized problem will in its essence be a combinatorial problem such that existence of at least one global minimizer will be guaranteed. 

\subsection{Discretized model problem and continuous relaxation}
\label{sec:2_2_MINLPpaper}

We begin with a semidiscretization of \eqref{eq:PDE_parabolicMINLPpaper} in space via the well-known \emph{method of lines}. Introducing a conforming mesh over $\Omega$ using $N$ vertices and letting $M\in\R^{N\times N}$ and $K\in\R^{N\times N}$ denote the mass and stiffness matrices (do note that $K$ and $M$ are positive definite and $M$ is symmetric), we end up with the system of ordinary differential equations (ODEs)
\begin{align}
\label{eq:ODE_parabolicMINLPpaper}
M\frac{\partial}{\partial t} y(t) + K y(t) = M\Phi u(t),\quad t\in(0,T),\quad y(0) = 0.
\end{align}
Here, $\Phi\in\R^{N\times l}$ contains the finite element coefficients of the source functions in its columns, i.e., each column contains the evaluation of the respective source function at the $N$ vertices of the grid. Thus, $M\Phi u(t)$ with the vector-valued control function $u(t):[0,T] \to \R^l$ realizes the semidiscrete right-hand side. Finally, $y:[0,T] \to \R^N$ now contains the FEM-coefficients of the solution. 

The ODE system \eqref{eq:ODE_parabolicMINLPpaper} can now be solved with a time integration method of choice and we choose the Crank-Nicholson scheme. Introducing an equidistant time-grid with $n_t\in \N$ points and step size $\dt := \frac{T}{n_t-1}$ and letting $y_i \approx y(i \dt)\in\R^N$ as well as $u_i \approx u(i \dt)\in\R^l$ denote the corresponding approximations, the scheme reads 
\begin{align}
\label{eq:theta_scheme_parabolicMINLPpaper}
(M + \frac{\dt}{2} K)y_{i+1} = (M-\frac{\dt}{2} K)y_i + \frac{\dt}{2} M \Phi u_i + \frac{\dt}{2} M \Phi u_{i+1},
\end{align}
for $i=0,\dots, n_t - 1$.
Introducing the matrices $K_1 := M +\frac{\dt}{2} K$ and $K_2 := M - \frac{\dt}{2} K$ as well as the matrices
\begin{align*}
I_1 := \begin{bmatrix}
1 & 0 &  \cdots & 0 \\
0 & 1 & \ddots & \vdots\\
\vdots & \ddots & \ddots & 0\\
0 & \cdots & 0 & 1
\end{bmatrix}\in\R^{n_t\times n_t}
\quad\mbox{and}\quad
I_2 := \begin{bmatrix}
0 & \cdots &  \cdots & 0 \\
1 & \ddots &  & \vdots\\
0 & \ddots & \ddots & \vdots\\
0 & 0 & 1 & 0
\end{bmatrix}\in\R^{n_t\times n_t},
\end{align*}
we define
\begin{align}
\Kt &:= I_1\otimes K_1 + I_2 \otimes -K_2 \in\R^{n_t\cdot N\times n_t\cdot N}\,\mbox{and}
\label{eq:ktilde}\\
\Phit &:= \frac{\dt}{2}\left( I_1 \otimes M\Phi + I_2 \otimes M\Phi\right)\R^{n_t\cdot N\times n_t\cdot N},
\end{align}
where $\otimes$ denotes the Kronecker product of matrices. Using $\tilde{K}$ and $\tilde{\Phi}$,
equation \eqref{eq:theta_scheme_parabolicMINLPpaper} for $i=0,\dots, n_t - 1$ can be written as
\begin{align}
\Kt y = \Phit u,
\end{align}
where from now on $y := \left( y_1\transp,\dots , y_{n_t}\transp\right)\transp\in\R^{n_t\cdot N}$ and $u := \left( u_1\transp,\dots , u_{n_t}\transp\right)\transp\in\R^{n_t\cdot l}$ denote the fully space-time discretized state and control vectors.

Assuming that the observation domain $\Oobs$ is aligned with the FEM grid and that it contains $p$ vertices of the grid, $\Mobs\in\R^{p\times p}$ denotes the mass matrix of $\Oobs$ and the matrix $C\in\R^{p\times N}$ then realizes the evaluation of the state on $\Oobs$. Letting ${\bf 1}_l := (1, \dots, 1)\transp \in \R^{l}$ denote the $l$-dimensional unit column vector, we define 
\begin{align}
\label{eq:obj_matrices}
\begin{split}
\Mt &:= I_1 \otimes \left(C\transp\Mobs C\right)\in\R^{n_t\cdot N\times n_t\cdot N} \quad\text{as well as}\\
\Cineq &:= I_1 \otimes {\bf 1}_l\transp \in \R^{n_t\times n_t\cdot l},\quad\mbox{and}\quad \Svec := {\bf 1}_{n_t} S.
\end{split} 
\end{align}
With this notation at hand, we can formulate the \emph{discretized optimal control problem}
\begin{align}
\begin{array}{cl}
\label{eq:MINLP_raw_MINLPpaper}	
\displaystyle
\min_{y\in\R^{n_t\cdot N}, u\in\R^{n_t\cdot l}} & \frac{1}{2}(y-\ybar)\transp \Mt (y-\ybar),\\
\mbox{s.t.} & \Kt y = \Phit u, ~ u \in \{0,1\}^{n_t\cdot l}, \Cineq u \leq \Svec.
\end{array}
\end{align}
In \eqref{eq:MINLP_raw_MINLPpaper} and for the remainder of this article, $\ybar$ represents a finite element coefficient vector instead of an actual $L^2((0,T)\times\Omega)$-function. 
Relaxing the integer constraints in \eqref{eq:MINLP_raw_MINLPpaper} yields the \emph{continuous relaxation}
\begin{equation}
\begin{array}{cl}
\label{eq:MINLP_raw_contrelax_MINLPaper}
\displaystyle
\min_{y\in\R^{n_t\cdot N}, u\in\R^{n_t\cdot l}} & \frac{1}{2}(y-\ybar)\transp \Mt (y-\ybar),\\
\mbox{s.t.} & \Kt y = \Phit u,~ u \in [0,1]^{n_t \cdot l}, ~ \Cineq u \leq \Svec.
\end{array}
\end{equation}
We reformulate both problems \eqref{eq:MINLP_raw_MINLPpaper} and \eqref{eq:MINLP_raw_contrelax_MINLPaper} in a more compact way.
\begin{lemma}
\label{lemma:X_and_W_MINLPpaper}
Introducing for $x\in\R^{n_t(N+l)}$ the functions
$$
\tilde{J}(x) := \frac{1}{2} x\transp \begin{bmatrix} \Mt & 0 \\ 0 & 0 \end{bmatrix} x - x\transp \begin{bmatrix} \Mt\ybar \\ 0 \end{bmatrix} + \frac{1}{2}\ybar\transp \Mt\ybar
$$
and $f:\R^{n_t\cdot l}\to\R^{n_t \cdot N}:u\mapsto \Kt^{-1}\Phit u$,
problems \eqref{eq:MINLP_raw_MINLPpaper} and \eqref{eq:MINLP_raw_contrelax_MINLPaper} are equivalent to
\begin{equation}
\tag{P}
\label{eq:MINLP_MINLPpaper}
\begin{split}
&\min_{x\in W} \tilde{J}(x)\quad \mbox{with the feasible set} \\ 
&W := \bigg\{x = (f(u)\transp,u\transp)\transp\in\R^{n_t(N+l)} \mathrel{\Big|} u \in \{0,1\}^{n_t\cdot l}, \Cineq u \leq \Svec\bigg\}
\end{split}
\end{equation}
and
\begin{equation}
\label{eq:MINLP_contrelax_MINLPaper}
\tag{Pcont}
\begin{split}
&\min_{x\in X} \tilde{J}(x)\quad \mbox{with the feasible set} \\ 
&X := \bigg\{x = (f(u)\transp,u\transp)\transp\in\R^{n_t(N+l)} \mathrel{\Big|} u \in [0,1]^{n_t\cdot l}, \Cineq u \leq \Svec\bigg\},
\end{split}
\end{equation}
respectively. Furthermore, $W\subset\R^{n_t(N+l)}$ is compact and $X\subset\R^{n_t(N+l)}$ is compact and convex such that \eqref{eq:MINLP_contrelax_MINLPaper} is a convex problem. 
\end{lemma}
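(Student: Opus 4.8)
The plan is to use the invertibility of $\Kt$ to eliminate the state variable from the equality constraint, thereby parametrizing the feasible set purely through the control $u$, and then to read off the equivalence, compactness, and convexity claims from this parametrization.

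First I would show that $\Kt$ is invertible, so that $f(u) = \Kt^{-1}\Phit u$ is well defined. From \eqref{eq:ktilde} together with the definitions of $I_1$ and $I_2$, the matrix $\Kt = I_1\otimes K_1 + I_2\otimes(-K_2)$ is block lower bidiagonal with the diagonal block $K_1 = M + \frac{\dt}{2}K$ repeated along its diagonal. Since $M$ and $K$ are positive definite, $K_1$ is positive definite and hence invertible, and a block-triangular matrix with invertible diagonal blocks is invertible; thus $\Kt^{-1}$ exists. This is the one genuinely structural step, and I expect it to be the main (if mild) obstacle, as everything afterwards is routine once $f$ is available.

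Next I would verify the equivalence. The equality constraint $\Kt y = \Phit u$ is equivalent to $y = f(u)$, so the feasible pairs $(y,u)$ of \eqref{eq:MINLP_raw_MINLPpaper} and \eqref{eq:MINLP_raw_contrelax_MINLPaper} correspond bijectively to points $x = (f(u)\transp, u\transp)\transp$ whose control component satisfies the integer (resp.\ box) and knapsack constraints, i.e.\ exactly to $W$ and $X$. The only difference between the two problems is $u\in\{0,1\}^{n_t\cdot l}$ versus $u\in[0,1]^{n_t\cdot l}$, so the same substitution handles both. Substituting $y = f(u)$ into $\frac{1}{2}(y-\ybar)\transp\Mt(y-\ybar)$ and noting that the zero block in the Hessian $\begin{bmatrix}\Mt & 0\\ 0 & 0\end{bmatrix}$ and the zero block in the linear part $\begin{bmatrix}\Mt\ybar\\ 0\end{bmatrix}$ annihilate every $u$-contribution, one obtains $\tilde{J}(x) = \frac{1}{2}(y-\ybar)\transp\Mt(y-\ybar)$; this matches both the objective and the feasible set, giving the claimed equivalence.

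Finally I would establish the topological and convexity statements. For $W$, the admissible control set $\{u\in\{0,1\}^{n_t\cdot l} \mid \Cineq u\le\Svec\}$ is finite, and $W$ is its image under the continuous map $u\mapsto (f(u)\transp,u\transp)\transp$, hence finite and therefore compact. For $X$, the admissible control set $U := \{u\in[0,1]^{n_t\cdot l} \mid \Cineq u\le\Svec\}$ is the intersection of the compact convex cube $[0,1]^{n_t\cdot l}$ with the polyhedron $\{\Cineq u\le\Svec\}$, hence compact and convex; since $f$ is linear, $u\mapsto (f(u)\transp,u\transp)\transp$ is affine, and the image of a compact convex set under an affine map is compact and convex, which yields the asserted compactness and convexity of $X$. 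To conclude that \eqref{eq:MINLP_contrelax_MINLPaper} is a convex problem, I would observe that $\Mt = I_1\otimes(C\transp\Mobs C)$ is symmetric positive semidefinite, because $\Mobs$ is a symmetric positive definite mass matrix and hence $z\transp C\transp\Mobs C z = (Cz)\transp\Mobs(Cz)\ge 0$; the Hessian of $\tilde{J}$ is therefore positive semidefinite, so $\tilde{J}$ is convex, and minimizing a convex function over the convex set $X$ is a convex problem.
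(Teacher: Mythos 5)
Your proof is correct and takes essentially the same approach as the paper, which omits the details by citing the stationary predecessor work \cite[Lemma 2.2]{garmatter2019improved}: elimination of the state via invertibility of the system matrix, finiteness of the binary control set for compactness of $W$, and the affine image of a compact convex set plus positive semidefiniteness of $\Mt$ for the claims about $X$. The one genuinely time-dependent point --- invertibility of the block lower bidiagonal $\Kt$ through positive definiteness of $K_1 = M + \frac{\dt}{2}K$, which holds even when $K$ is nonsymmetric as in the convection-diffusion case --- is handled correctly in your argument.
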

\begin{proof}
The proof follows the same arguments as \cite[Lemma 2.2]{garmatter2019improved} and is thus omitted here.
\end{proof}

\subsection{Penalty reformulation}
\label{sec:2_3_MINLPpaper}

Starting from the continuous relaxation \eqref{eq:MINLP_raw_contrelax_MINLPaper}, we add the well-known penalty term 
\begin{equation}
\label{eq:penaltyterm_MINLPpaper}
\frac{1}{\varepsilon} \sum_{j=1}^{n_t\cdot l} u^{(j)}(1-u^{(j)})
\end{equation}
to the objective function. Obviously, this concave penalty term penalizes a non-binary control, where $\varepsilon > 0$ determines the amount of penalization. This yields the following \emph{penalty formulation}
\begin{equation}
\begin{array}{cl}
\label{eq:MINLP_raw_penalty_MINLPpaper}
\displaystyle
\min_{y\in\R^{n_t\cdot N}, u\in\R^{n_t\cdot l}} & \frac{1}{2}(y-\ybar)\transp \Mt (y-\ybar) + \frac{1}{\varepsilon} \sum_{j=1}^{n_t\cdot l} u^{(j)}(1-u^{(j)}),\\
\mbox{s.t.} & \Kt y = \Phit u,~ u \in [0,1]^{n_t\cdot l}, \Cineq u \leq \Svec.
\end{array}
\end{equation}
Following Lemma \ref{lemma:X_and_W_MINLPpaper}, \eqref{eq:MINLP_raw_penalty_MINLPpaper} can be rewritten as
\begin{equation}
\label{eq:MINLP_penalty}
\tag{Ppen}
\begin{split}
&\min_{x\in X} \Jt(x;\varepsilon),\quad\mbox{with}\\
&\Jt(x;\varepsilon) := \frac{1}{2} x\transp \begin{bmatrix} \Mt & 0 \\ 0 & -\frac{2}{\varepsilon}I_{n_t\cdot l} \end{bmatrix} x - x\transp \begin{bmatrix} \Mt\ybar \\ -\frac{1}{\varepsilon}{\bf 1}_{n_t \cdot l} \end{bmatrix} + \frac{1}{2}\ybar\transp \Mt\ybar, 
\end{split}
\end{equation}
where $I_{n_t\cdot l}\in\R^{n_t\cdot l\times n_t\cdot l}$ is the identity-matrix.
\begin{proposition}
\label{prop:Equiv_MINLPpaper}
There exists an $\tilde{\varepsilon} > 0$ such that for all $\varepsilon \in (0,\tilde{\varepsilon}]$ problems \eqref{eq:MINLP_MINLPpaper} and \eqref{eq:MINLP_penalty} have the same global minima (if there exist multiple). In this sense both problems \eqref{eq:MINLP_MINLPpaper} and \eqref{eq:MINLP_penalty} are equivalent.
\end{proposition}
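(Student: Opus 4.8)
The plan is to follow the classical exact-penalty argument for concave penalizations, adapted to the structure of \eqref{eq:MINLP_MINLPpaper} and \eqref{eq:MINLP_penalty}. First I would eliminate the state by substituting $y = f(u) = \Kt^{-1}\Phit u$, which turns both problems into optimization problems over the single polytope
\[
U := \{u\in[0,1]^{n_t\cdot l}\mid \Cineq u\le\Svec\},
\]
whose binary points are exactly the $u$-components of $W$. Since $f$ is linear and $\Mt\succeq 0$, the reduced objective of \eqref{eq:MINLP_MINLPpaper} is a convex quadratic $g(u) := \tfrac12 f(u)\transp\Mt f(u) - f(u)\transp\Mt\ybar + \tfrac12\ybar\transp\Mt\ybar$ with positive semidefinite Hessian $H := (\Kt^{-1}\Phit)\transp\Mt(\Kt^{-1}\Phit)$, while the reduced objective of \eqref{eq:MINLP_penalty} is $\hat g(u;\varepsilon) := g(u) + \tfrac1\varepsilon({\bf 1}_{n_t\cdot l}\transp u - u\transp u)$, whose Hessian is $H - \tfrac2\varepsilon I_{n_t\cdot l}$.

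Two observations drive the argument. On the one hand, for binary $u$ the penalty satisfies ${\bf 1}\transp u - u\transp u = \sum_j u^{(j)}(1-u^{(j)}) = 0$, and it is strictly positive for every non-binary $u\in[0,1]^{n_t\cdot l}$; hence $\hat g(\cdot;\varepsilon)$ and $g$ agree on the binary feasible points, and in particular $\min_{u\in U}\hat g(u;\varepsilon)$ is bounded above by the optimal value of \eqref{eq:MINLP_MINLPpaper}, since those binary points are feasible for the reduced penalized problem. On the other hand, choosing $\tilde\varepsilon$ so small that $\tfrac2{\tilde\varepsilon} > \lambda_{\max}(H)$ --- e.g.\ $\tilde\varepsilon := (\lambda_{\max}(H)+1)^{-1}$ --- guarantees that for every $\varepsilon\in(0,\tilde\varepsilon]$ the Hessian $H - \tfrac2\varepsilon I$ is negative definite, so $\hat g(\cdot;\varepsilon)$ is strictly concave on $\R^{n_t\cdot l}$. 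A strictly concave function attains its minimum over a polytope only at vertices (a non-extreme minimizer would be the midpoint of two distinct feasible points, one of which would have strictly smaller value), so every global minimizer of the reduced penalized problem is a vertex of $U$.

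The crucial structural fact is then that the vertices of $U$ are binary. Because $\Cineq = I_1\otimes{\bf 1}_l\transp$ is block diagonal, the constraint $\Cineq u\le\Svec$ decouples across time steps, so $U = \prod_{i=1}^{n_t} U_i$ with $U_i = \{u^{(i)}\in[0,1]^l \mid {\bf 1}_l\transp u^{(i)}\le S\}$; for integer $S$ each $U_i$ is the convex hull of the binary vectors with at most $S$ ones (equivalently, its single knapsack row stacked with the box constraints is totally unimodular with integral right-hand side), hence integral, and a product of integral polytopes is integral. Thus all vertices of $U$ lie in $\{0,1\}^{n_t\cdot l}$, so every global minimizer of \eqref{eq:MINLP_penalty} is binary, lies in $W$, and there $\hat g = g = \Jt$; together with the upper bound above this forces $\min_U\hat g(\cdot;\varepsilon)$ to equal the optimal value of \eqref{eq:MINLP_MINLPpaper} and the two minimizer sets to coincide. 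The reverse inclusion is immediate: any global minimizer of \eqref{eq:MINLP_MINLPpaper} is feasible for \eqref{eq:MINLP_penalty} with vanishing penalty, hence attains the same value and is a global minimizer of \eqref{eq:MINLP_penalty}. Undoing the substitution $y=f(u)$ via the affine bijection $u\mapsto(f(u)\transp,u\transp)\transp$ transfers everything back to the $x$-variables, giving the claimed equivalence.

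I expect the main obstacle to be the integrality of the feasible polytope $U$: without binary vertices a strictly concave objective could be minimized at a fractional vertex where the penalty does not vanish, and the equivalence would fail. Establishing that the knapsack-plus-box structure yields an integral polytope --- and carefully matching the two sets of global minimizers in both directions, using \emph{strict} rather than mere concavity to exclude non-vertex minimizers --- is therefore the heart of the proof; the concavity threshold on $\varepsilon$ and the vanishing of the penalty on binary points are routine by comparison.
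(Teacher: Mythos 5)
Your proof is correct, but it takes a genuinely different route from the paper's. The paper disposes of Proposition \ref{prop:Equiv_MINLPpaper} in two lines: it notes that $W$ and $X$ are compact (Lemma \ref{lemma:X_and_W_MINLPpaper}) and that $\tilde{J}$, being quadratic, is $C^1$, and then invokes the abstract exact-penalty result \cite[Theorem 2.1]{Lucidi_2010}, which yields the existence of $\tilde{\varepsilon}$ without any structural analysis of the feasible set. You instead give a self-contained argument in the classical concave-programming tradition (cf.\ \cite{Rinaldi_2009}, which the paper cites right after the proposition): eliminate the state via $y=f(u)$, take $\varepsilon$ below the explicit threshold determined by $2/\varepsilon>\lambda_{\max}(H)$ so that the penalized reduced objective is strictly concave, conclude that all its minimizers are vertices of the control polytope, and verify that this polytope is integral because $\Cineq u\leq \Svec$ decouples over time steps into knapsack-plus-box blocks whose vertices are binary (your direct check is sound: at a vertex with the knapsack row active, the one possibly fractional coordinate equals $S$ minus a sum of binaries, hence is integral). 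You also correctly identify integrality as the load-bearing step --- your argument would fail if additional constraints destroyed integrality of the vertices, whereas the theorem of \cite{Lucidi_2010} needs no such polyhedral structure, which is also why the paper's proof transfers verbatim to the other concave penalty terms of \cite{Lucidi_2010,Rinaldi_2009} mentioned after the proposition. What your route buys in exchange is a computable threshold $\tilde{\varepsilon}=(\lambda_{\max}(H)+1)^{-1}$ (the cited result is purely existential) and independence from the external reference; what it costs is reliance on the specific quadratic objective and on the totally unimodular knapsack structure of $X$.
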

\begin{proof}
From Lemma \ref{lemma:X_and_W_MINLPpaper} we know that $W$ and $X$ are compact and since $\tilde{J}$ is a quadratic function, it clearly holds that $\tilde{J} \in C^1(\R^{N+l})$. Together with the results derived in \cite[Section 3]{Lucidi_2010} all assumptions of \cite[Theorem 2.1]{Lucidi_2010} are fulfilled such that the desired statement follows.
\end{proof}
Proposition \ref{prop:Equiv_MINLPpaper} holds for a variety of concave penalty terms, see, e.g., \cite[Equations (19)-(23)]{Lucidi_2010} or \cite[Equation (21)]{Rinaldi_2009}. Nevertheless, we chose the penalty term \eqref{eq:penaltyterm_MINLPpaper} here since it is quadratic and thus the combined objective function $\Jt$ remains quadratic.

The repeated solution of the penalty formulation \eqref{eq:MINLP_penalty} for an increasing value of the penalty parameter $\varepsilon$  will be the core of our solution procedure for the overall MIPDECO problem \eqref{eq:MINLP_MINLPpaper}. This procedure will be based on the IPA algorithm proposed in \cite{garmatter2019improved} and  its extension to the time-dependent setting  is postponed to Section \ref{sec:4_MINLPpaper}. In the following section we present the main algorithmic novelty instead, that is the suitable combination of the model order reduction and the interior point method for the efficient solution of the penalty formulation \eqref{eq:MINLP_penalty}.

\section{Model Order Reduction  and Interior Point Methods}
\label{sec:3_MINLPpaper}

Solving the overall MIPDECO problem \eqref{eq:MINLP_MINLPpaper} via a penalty approach requires numerous solves of \eqref{eq:MINLP_penalty}.
Thus, an efficient method to handle the problem \eqref{eq:MINLP_penalty} for a given $\varepsilon$ is crucial to an overall effective solution procedure.
With this purpose, we present two procedures based on
interior point methods: the first one, a generalization of the interior point method (IPM) proposed in \cite{garmatter2019improved} to the time-dependent case, will be denoted full-IPM. The second one, a novel combination of model order reduction (MOR) and an IPM, will be denoted MOR-IPM and is the main contribution of this work. Both methods are equipped with a preconditioning technique that exploits the specific problem structure in \eqref{eq:MINLP_penalty}.

\subsection{Model order reduction approach}
\label{sec:3_1_MINLPpaper}

The central idea is to derive a low-dimensional approximation of the PDE constraint 
$$\Kt y = \Phit u \quad \mbox{via}\quad \Ktred \yred = \Phitred u,$$
with suitable $\Ktred\in\R^{n_t\cdot r\times n_t\cdot r}$ and $\Phitred\in\R^{n_t\cdot r\times n_t\cdot l}$ such that the \emph{reconstruction} $\hC\yred\in\R^{n_t\cdot N}$, with $\hC\in \R^{n_t\cdot N\times n_t\cdot r}$, is a good approximation to $y$.
It is clear that only the dimension of the state is reduced to $r\ll N$, where the dimension of the control (in fact the control as a whole) remains untouched.
Based on this approximation, we can then (similarly to Lemma \ref{lemma:X_and_W_MINLPpaper}) introduce the linear mapping $\fred :\R^{n_t\cdot l}\to\R^{n_t\cdot N} : u\mapsto \hC \Ktred^{-1}\Phitred u$ and formulate the reduced version of the penalty formulation
\begin{equation}
\label{eq:MINLP_red_penalty}
\tag{Ppen\textsubscript{red}}
\begin{split}
&\min_{x\in \Xred} \Jt(x;\varepsilon)\quad \mbox{with the feasible set} \\ 
&\Xred := \bigg\{x = (\fred(u)\transp,u\transp)\transp\in\R^{n_t(N+l)} \mathrel{\Big|} u \in [0,1]^{n_t\cdot l}, \Cineq u \leq \Svec\bigg\}.
\end{split}
\end{equation}
Thus, only the linear map inside the feasible set changes and the better $\fred$ approximates $f$, the closer $X$ and $\Xred$ are. In the same fashion, the reduced mixed-integer control problem can be formulated as
\begin{equation}
\label{eq:MINLP_red_MINLPpaper}
\tag{P\textsubscript{red}}
\begin{split}
&\min_{x\in \Wred} \tilde{J}(x)\quad \mbox{with the feasible set} \\ 
&\Wred := \bigg\{x = (\fred(u)\transp,u\transp)\transp\in\R^{n_t(N+l)} \mathrel{\Big|} u \in \{0,1\}^{n_t\cdot l}, \Cineq u \leq \Svec\bigg\}.
\end{split}
\end{equation}
Again, the more accurate our approximation of the PDE is, the better $\fred$ approximates $f$ and the closer $\Wred$ is to $W$. Furthermore, the reduced penalty formulation \eqref{eq:MINLP_red_penalty} links to the reduced optimal control problem \eqref{eq:MINLP_red_MINLPpaper} in the same way as \eqref{eq:MINLP_penalty} links to \eqref{eq:MINLP_MINLPpaper} in Proposition \ref{prop:Equiv_MINLPpaper}, i.e., there exists an $\tilde{\varepsilon} > 0$ such that for all $\varepsilon \in (0,\tilde{\varepsilon}]$ problems \eqref{eq:MINLP_red_MINLPpaper} and \eqref{eq:MINLP_red_penalty} have the same minimum points.

Before we elaborate on the theoretical justification of this approach, we want to actually apply our model order reduction technique of choice, the \emph{balanced truncation}, see, e.g., \cite{antoulas2005approximation}, and explicitly derive the unknown quantities inside $\fred$, i.e., $\hat C$, $\Ktred$ and $\Phitred$.

\subsubsection{Balanced Truncation and reduced state system}
\label{sec:3_1_1_MINLPpaper}

Since the balanced truncation (BT) is a model order reduction technique for linear time-invariant (LTI) systems, we have to refer
to the semidiscretized ODE-system described in \eqref{eq:ODE_parabolicMINLPpaper}. We reformulate the system and add an output equation (that corresponds to the evaluation of the state on the domain of observation $\Oobs$) to fit the formulation within the standard BT literature, that is
\begin{align}
\begin{split}
\label{eq:ODE_parabolic_2_MINLPpaper}
M\frac{\partial}{\partial t} y(t) &= - K y(t) + M\Phi u(t),\quad t\in(0,T),\quad y(0) = 0,\\
\yout(t) &= Cy(t).
\end{split}
\end{align}
Note that the addition of the output equation is natural here since only the state values inside $\Oobs$ are of interest for the objective function.
After the application of the BT to this system, one can then apply a time-integration method to the resulting reduced system of ODEs to obtain $\Ktred$, $\Phitred$, and $\hC$ and thus $\fred$.

Equation \eqref{eq:ODE_parabolic_2_MINLPpaper} is an LTI system in generalized state-space form, such that we apply the generalized BT, see, e.g., \cite{saak2009efficient,badia2006balanced}. We briefly recapitulate the key steps. Our aim is to construct projection matrices $T_1\in\R^{r\times N}$ and $T_2\in\R^{N\times r}$ such that 
\begin{align}
\label{eq:red_LTI}
\begin{split}
\Mred &:= T_1 M T_2 \in \R^{r\times r},\qquad \Kred := T_1 K T_2\in \R^{r\times r},\\
\Phired &:= T_1 M \Phi\in \R^{r\times l}, \qquad \Cred := CT_2 \in \R^{p\times r},
\end{split}
\end{align}
yielding the reduced LTI system. First, we require factorizations $P = RR\transp\in \R^{N\times N}$ and $Q=LL\transp\in \R^{N\times N}$ of the solutions of the following generalized Lyapunov equations
\begin{align}
\label{eq:LyapunovEQ}
\begin{split}
&-K P M\transp - M P K\transp + M\Phi\Phi\transp M\transp = 0,\\
&-K\transp Q M - M\transp Q K + C\transp C = 0.
\end{split}
\end{align}
It is well-known that $P$ and $Q$ are positive semi-definite, such that these factorizations exist ($R$ and $L$ are often called "Cholesky" factors of $P$ and $Q$ even if they are not Cholesky factors in the strict sense). With these factors at hand, we calculate the singular value decomposition (SVD) of $L\transp M R = U\Sigma V\transp$ and mention that up to now, all these steps can be performed in a one-time offline fashion. 

Now, we choose a reduced dimension $r\ll N$ and based on this, we split the SVD with respect to this dimension $r$ as
\begin{align}
L\transp M R = U\Sigma V\transp = \begin{bmatrix}
U_1 & U_2
\end{bmatrix} \begin{bmatrix} \Sigma_1 & 0\\ 0 & \Sigma_2 
\end{bmatrix} 
\begin{bmatrix}
V_1\transp\\V_2\transp
\end{bmatrix} 
\end{align}
with $\Sigma_1 := \mathrm{diag}(\sigma_1,\dots , \sigma_r)$ and $\Sigma_2 := \mathrm{diag}(\sigma_{r+1},\dots , \sigma_N)$, where $\sigma_r > \sigma_{r+1}$ and $\sigma_j,~ j=1,\dots, N$ are the so-called \emph{Hankel singular values} of the system \eqref{eq:ODE_parabolicMINLPpaper}. Based on this truncated SVD, we define the projection matrices
$$
T_1 := \Sigma_1^{-1/2} V_1\transp R\transp\quad\mbox{and}\quad T_2 := L U_1 \Sigma_1^{-1/2}
$$
such that we obtain the \emph{reduced model} as in \eqref{eq:red_LTI}.
As a result, we obtain the reduced LTI system for the \emph{reduced state} $\hyred(t)\in\R^r$
\begin{align}
\begin{split}
\label{eq:redODE_parabolicMINLPpaper}
\Mred\frac{\partial}{\partial t} \hyred(t) &= - \Kred \hyred(t) + \Phired u(t),\quad t\in(0,T),\quad \hyred(0) = 0,\\
\yredout(t) &= \Cred \hyred(t),
\end{split}
\end{align}
which only depends on the reduced dimension $r\ll N$, and note that the control dimension remains untouched.
Similar to Section \ref{sec:2_2_MINLPpaper}, we apply the Crank-Nicholson scheme to the state equation of \eqref{eq:redODE_parabolicMINLPpaper}, which can then again be written in an all at once formulation using Kronecker-product matrices. Letting $y_{i,\mathrm{red}} \approx \hyred( i \dt)\in\R^r$, we collect these approximations in $\yred := \left(y_{1,\mathrm{red}}\transp,\dots, y_{n_t,\mathrm{red}}\transp\right)\transp\in\R^{n_t\cdot r}$ and obtain 
\begin{align}
\label{eq:red_fully_disc_PDE}
\Ktred \yred = \Phitred u,
\end{align}
where
\begin{align}\label{eq:KPhitildered}
\begin{split}
\Ktred &:= I_1\otimes K_{1,\mathrm{red}} + I_2 \otimes -K_{2,\mathrm{red}} \in\R^{n_t\cdot r\times n_t\cdot r},\\
\Phitred &:= \frac{\dt}{2}\left( I_1 \otimes \Phired + I_2 \otimes \Phired\right)\R^{n_t\cdot r\times n_t\cdot l},
\end{split}
\end{align}
with $K_{1,\mathrm{red}} := \Mred + \frac{\dt}{2}\Kred$ and $K_{2,\mathrm{red}} := \Mred - \frac{\dt}{2} \Kred$. 
Finally, we define
\begin{equation}\label{eq:Chat}
\hat{C} = I_1 \otimes T_2
\end{equation}such that we obtain the \emph{reconstruction} $\fred(u) = \hC\yred \in\R^{n_t \cdot N}$, 
which then approximates $f(u) =  \Kt^{-1}\Phit u$. Thus, all quantities in the reduced optimal control problem \eqref{eq:MINLP_red_penalty} are now known and an IPM for the problem can be derived. We note that the matrix $\Cred$ does not appear here, since $\Cred = C T_2$ where the $T_2$
part is integrated in $\hC$ and the $C$ part is already included in $\Mt$ inside the objective function, see \eqref{eq:obj_matrices}. We also highlight that the approximation quality of the BT relies on the size of the reduced dimension $r$ and the investigation of this parameter will be the subject of the next section together with the analysis of further theoretical properties.

\subsubsection{Theoretical insights}
\label{sec:3_1_2_MINLPpaper}

We present two known theoretical results for BT and relate them to our problem. First,  a standard result targets the error between the output $\yout(t)$ of the LTI system \eqref{eq:ODE_parabolic_2_MINLPpaper}, and $\yredout,$ the output of the reduced LTI system \eqref{eq:redODE_parabolicMINLPpaper}. It requires that the system is asymptotically stable, which is the case here since both $M$ and $K$ are positive definite. For control functions $u\in L^2(0,T)$ and if the reduced LTI system \eqref{eq:redODE_parabolicMINLPpaper} was obtained via balanced truncation with reduced dimension $r \leq N$ it holds, see, e.g., \cite{antoulas2005approximation,benner2014model,antil2010domain}, that
\begin{align}
\label{eq:BT_error_1}
\norm{\yout - \yredout}_{L^2(0,T)} \leq 2 \norm{u}_{L^2(0,T)} \left(\sigma_{r+1} + \dots + \sigma_N\right),
\end{align}
where $\sigma_{r+1} + \dots + \sigma_N$ is the sum of the truncated Hankel singular values.
As a result, the approximation quality of the balanced truncation depends on the size of this sum and the LTI system \eqref{eq:ODE_parabolic_2_MINLPpaper} can be well-approximated if the Hankel singular values are quickly decaying. If the decay in the singular values is very slow, the reduced dimension has to be chosen comparably large to still ensure a good approximation. But a large $r$ negatively impacts the computational time required to solve the reduced system \eqref{eq:red_fully_disc_PDE} (since it is dense) and one might not even gain a speed-up if $r$ is too large. 
We state the second result from the literature, see \cite[Corollary 1]{antil2010domain}.

The result gives an error bound for the error between $u_*$, the solution of a generic quadratic optimal control problem
\begin{equation}
\label{eq:OCP_err_bound_1}
\begin{array}{cl}
\displaystyle\min_{u(t)} &\frac 1 2 \int_0^T \norm{\mcC y(t) + \mcD u(t) - y_d(t)}^2 \dx[t], \\
\mbox{ s.t. } 
& \mcM  \frac{\partial}{\partial t} y(t) = \mcA y(t) + \mcB u(t),\quad t\in(0,T),\quad y(0) = y_0,
\end{array}
\end{equation}
and $\hat{u}_*$, the solution of a corresponding reduced optimal control problem 
\begin{equation}
\label{eq:OCP_err_bound_2}
\begin{array}{cl}
\displaystyle\min_{u(t)} &\frac 1 2 \int_0^T \norm{\mchC \hy(t) + \mcD u(t) - y_d(t)}^2 \dx[t], \\
\mbox{ s.t. } 
& \frac{\partial}{\partial t} \hy(t) = \mchA \hy(t) + \mchB u(t),\quad t\in(0,T),\quad \hy(0) = \hy_0,
\end{array}
\end{equation}
where the reduced ODE system was obtained via balanced truncation. The result furthermore requires that $\mcM$ is symmetric, positive definite, that there exists an $\alpha>0$ such that $v\transp \mcA v \leq -\alpha v\transp\mcM v$ for all $v\in\R^N$, and that the objective function of \eqref{eq:OCP_err_bound_1} is strictly convex. Then, \cite[Corollary 1]{antil2010domain} yields the bound
\begin{align}
\label{eq:BT_error_2}
\norm{u_* - \hat{u}_*}_{L^2(0,T)} \leq \frac{2}{\kappa} \left(c\norm{\hat{u}_*}_{L^2(0,T)} + \norm{\hat{z}_*}_{L^2(0,T)}\right) \left(\sigma_{r+1} + \dots + \sigma_N\right),
\end{align}
where $\kappa$ is a constant associated with the convexity of the objective of \eqref{eq:OCP_err_bound_1}, $\hat{z}_* = \mchC \hy_* + \mcD \hat{u}_* - y_d$ with $\hy_*$ being the state corresponding to  $\hat{u}_*$, and $c$ is a constant associated to the ODE system.

This result could be applied to the continuous relaxation \eqref{eq:MINLP_contrelax_MINLPaper} with $\mcC = C$, $\mcD = 0$, $\mcM = M$, $\mcA = -K$, and $\mcB = M\Phi$.
Conversely, a bound similar to \eqref{eq:BT_error_2} cannot be expected for the penalty formulation \eqref{eq:MINLP_penalty} (on which the mixed-integer approach in this work is based) as convexity of the objective function is out of reach there.

Nonetheless, we want to stress that the driving term in the bound \eqref{eq:BT_error_2} again is the sum of the remaining Hankel singular values. Thus, if this sum is small and the reduced system  provides a good approximation, one can infer that a solution of \eqref{eq:MINLP_red_penalty} is sufficiently close to the corresponding solution of \eqref{eq:MINLP_penalty}. With \eqref{eq:BT_error_1} at hand, the feasible sets $\Xred$ and $X$ should then be close enough. The solutions of the overall MIPDECO problems \eqref{eq:MINLP_red_MINLPpaper} and \eqref{eq:MINLP_MINLPpaper} should be close, or even the same, as well.

\subsection{The interior point framework}
\label{sec:3_2_MINLPpaper}
We now briefly describe the main steps of the two interior point methods that will be employed to solve the reduced \eqref{eq:MINLP_red_penalty} and full \eqref{eq:MINLP_penalty} formulations, respectively.
The derivation of the IPMs follows \cite{Gondzio_2012} and, more specifically, \cite{garmatter2019improved}.
We first observe that problems  \eqref{eq:MINLP_red_penalty} and  \eqref{eq:MINLP_penalty} can be rewritten as 
\begin{equation}
\begin{array}{cl}
\label{eq:IPMred_parabolicMINLPpaper}
\displaystyle\min_{\substack{\yred\in \R^{n_t \cdot r},u\in \R^{n_t\cdot l},\\ z\in \R^{n_t}}} & 
\frac 1 2 (\hC\yred-y_d)\transp \Mt (\hC\yred-y_d)  + \frac{1}{\varepsilon} ({\bf 1}_{n_t \cdot l}\transp u-u\transp u), \\
\mbox{ s.t. } 
& \Ktred \yred = \Phitred u \qquad \mbox{ and } \qquad  \Cineq u + z - \Svec =0,\\
&   \\
& 0 \le u \le 1 \qquad \mbox{ and } \qquad z \ge 0,
\end{array}
\end{equation}
and 
\begin{equation}
\begin{array}{cl}
\label{eq:IPMfull_parabolicMINLPpaper}
\displaystyle\min_{y\in \R^{n_t \cdot N},u\in \R^{n_t\cdot l}, z\in \R^{n_t}} &  
\frac 1 2 (y-y_d)\transp \Mt (y-y_d)  + \frac{1}{\varepsilon} ({\bf 1}_{n_t \cdot l}\transp u-u\transp u), \\
\mbox{ s.t. } 
& \Kt y = \Phit u \qquad \mbox{ and } \qquad  \Cineq u + z - \Svec =0,\\
&   \\
& 0 \le u \le 1 \qquad \mbox{ and } \qquad z \ge 0,
\end{array}
\end{equation}
respectively, where $0 \leq z\in\R^{n_t}$ is a vector of slack variables. We recall that $\Mt$ is defined in \eqref{eq:obj_matrices} and handles the observation on the subdomain $\Oobs$.

The main idea of an IPM is the elimination of the inequality constraints on $u$ and $z$ via the introduction of corresponding logarithmic barrier functions weighted by the barrier parameter $\mu > 0$ that controls the relation between the barrier term and the original objectives.
Then, first-order optimality conditions are derived by applying duality theory resulting in a nonlinear system parametrized by $\mu$. For problem \eqref{eq:IPMred_parabolicMINLPpaper}
the nonlinear system takes the form 
\begin{subequations}
\label{eq:kkt}
\begin{eqnarray}
\hat C^T \Mt \hat C \yred - \hat C^T \Mt y_d + \Ktred\transp  p  &= 0, \\ 
\frac{1}{\varepsilon}({\bf 1}_{n_t\cdot l} - 2u) - \Phitred\transp p + \Cineq\transp q - \lambda_{u,0} + \lambda_{u,1} &= 0,\\ 
q - \lambda_{z,0}  = 0, 
\quad \Ktred \yred - \Phitred u = 0,\quad \Cineq u + z - \Svec &= 0, 
\end{eqnarray}
\end{subequations}
where the Lagrange multipliers $\lambda_{u,0}$, $\lambda_{u,1}\in\R^{n_t \cdot l}$, and $\lambda_{z,0}\in\R^{n_t}$ are defined as
\begin{equation*}
(\lambda_{u,0})_i := \frac{\mu}{u_i},~ (\lambda_{u,1})_{i} := \frac{\mu}{1 - u_i}, ~ i = 1,\dots , n_t\cdot l,~  \mbox{and}~
(\lambda_{z,0})_i := \frac{\mu}{z_i},~ i=1,\dots ,n_t.
\end{equation*}
Furthermore, the bound constraints $\lambda_{u,0} \geq 0$, $\lambda_{u,1} \geq 0$, and $\lambda_{z,0} \geq 0$ then enforce the constraints on $u$ and $z$.
Here  $p\in\R^{n_t \cdot r}$ is the reduced Lagrange multiplier (or adjoint variable) associated with the reduced state equation and $q \in \R^{n_t} $ is the Lagrange multiplier associated with the equations $\Cineq u + z - \Svec =0$.

The crucial step of deriving the IPM is the application of Newton's method to the above nonlinear system. Letting $\yred$, $u$, $z$, $p$, $q$, $\lambda_{u,0}$, $\lambda_{u,1}$, and $\lambda_{z,0}$ denote the most recent Newton iterates, these are then updated in each iteration by computing the corresponding Newton steps $\Delta \yred$, $\Delta u$, $\Delta z$, $\Delta p$, $\Delta q$, $\Delta \lambda_{u,0}$, $\Delta \lambda_{u,1}$, and $\Delta \lambda_{z,0}$ through the solution of the Newton system with the following coefficient matrix
\begin{align}
\label{eq:red_NewtonSystem}
\mcNred =
\begin{bmatrix}
  \hat C^T \Mt \hat C      &    0     &   0   &   \Ktred\transp   & 0  \\
 0             &  -\frac 2 \varepsilon I_{n_t \cdot l} + \Theta_u &   0 & -\Phitred\transp  & \Cineq\transp \\
0             &  0                       &   \Theta_z   & 0           &  I_{n_t}    \\
  \Ktred          &  -\Phitred  &   0   &  0          &  0    \\
  0            &  \Cineq       &   I_{n_t}  &   0     & 0
\end{bmatrix}.
\end{align}
Here, $\Theta_u := U^{-1} \Lambda_{u,0} + (I_l - U )^{-1} \Lambda_{u,1}$, $\Theta_z := Z^{-1}\Lambda_{z,0}$, and   $U,~Z$, $\Lambda_{u,0}$, $\Lambda_{u,1}$, as well as  $\Lambda_{z,0}$ are diagonal matrices with the most recent iterates of $u$, $z$, $\lambda_{u,0}$, $\lambda_{u,1}$, and $\lambda_{z,0}$ appearing on their diagonal entries. Once the the Newton system is solved, one can compute the steps for the Lagrange multipliers via
\begin{align*}
\Delta \lambda_{u,0} & = - \lambda_{u,0} - U^{-1}( \Lambda_{u,0} \Delta u  -\mu{\bf 1}_{n_t\cdot l}),\\
\Delta \lambda_{u,1} & = - \lambda_{u,1} + (I_{n_t \cdot l}-U)^{-1} (\Lambda_{u,1} \Delta u  + \mu {\bf 1}_{n_t\cdot l}),\\
\Delta \lambda_{z,0} & = - \lambda_{z,0} - Z^{-1}( \Lambda_{z,0} \Delta z - \mu {\bf 1}_{n_t}).
\end{align*}
A general IPM implementation only involves one Newton step per iteration. Thus, after choosing suitable step-lengths so that the updated iterates remain feasible, the new iterates can be calculated and the barrier parameter $\mu$ is reduced, thus concluding one iteration of the IPM.
Finally, we report the primal and dual feasibilities 
\begin{equation*}
 \xi_p:=  \begin{bmatrix} \Ktred \yred - \Phitred u \, \\
              \Cineq u+ z- \Svec   
              \end{bmatrix},
 \xi_d:= \begin{bmatrix}
\hat C^T \Mt \hat C \yred - \hat C^T \Mt y_d + \Ktred\transp p   \\
 \frac 1 \varepsilon ({\bf 1}_{n_t \cdot l} - 2 u) -\Phitred\transp p+ \Cineq\transp q - \lambda_{u,0} + \lambda_{u,1} \\                
q- \lambda_{z,0}  
\end{bmatrix}
\end{equation*}
as well as the complementarity gap
\begin{equation*}
\xi_c :=
\begin{bmatrix}
U\lambda_{u,0} - \mu{\bf 1}_{n_t\cdot l}, & (I_{n_t\cdot l} - U) \lambda_{u,1} - \mu{\bf 1}_{n_t\cdot l}, & Z\Lambda_{z,0} - \mu {\bf 1}_{n_t}
\end{bmatrix}\transp,
\end{equation*}
where measuring the change in the norms of $\xi_p$, $\xi_d$, and $\xi_c$ allows us to monitor the convergence of the entire process.
This completes the general description of the MOR-IPM. 

The derivation of the full-IPM for problem
\eqref{eq:IPMfull_parabolicMINLPpaper} is analogous
taking into account that the adjoint variable $p\in\R^{n_t \cdot N}$ now depends on the full dimension $N$ instead of the reduced dimension $r$ (thus, we chose not to introduce extra notation). The coefficient matrix of the resulting Newton system takes the form
\begin{align}
\label{eq:NewtonSystem}
\mcN =
\begin{bmatrix}
  \Mt            &    0     &   0   &   \Kt\transp   & 0  \\
 0             &  -\frac 2 \varepsilon I_{n_t \cdot l} + \Theta_u &   0 & -\Phit\transp  & \Cineq\transp \\
0             &  0                       &   \Theta_z   & 0           &  I_{n_t}    \\
  \Kt          &  -\Phit  &   0   &  0          &  0    \\
  0            &  \Cineq       &   I_{n_t}  &   0     & 0
\end{bmatrix},
\end{align}
where the diagonal matrices $\Theta_u$ and $ \Theta_z$ are defined as for the matrix $\mcNred$ in \eqref{eq:red_NewtonSystem}.


We note that the matrices $\mcNred$  and $\mcN$  in  \eqref{eq:red_NewtonSystem} and \eqref{eq:NewtonSystem} have the same block structure. Moreover we observe that $\Mt$ is symmetric as
by \eqref{eq:obj_matrices} it inherits the  symmetry from $M_{obs}$,
and singular; $\Ktred$ and $\Kt$ are not symmetric no matter the symmetry of the original stiffness matrix, see definitions \eqref{eq:KPhitildered} and \eqref{eq:ktilde} respectively; $\Theta_u$ and $\Theta_z >0$, while being positive definite, are typically very ill-conditioned.
Moreover, due to the term $-\frac 2 \varepsilon I_{n_t\cdot l} $, the block  $-\frac 2 \varepsilon I_{n_t\cdot l} + \Theta_u$ may be indefinite, especially for small values of $\varepsilon$.
Following suggestions in \cite[Chapter 19.3]{nocedalwright1999numericalopt} to handle nonconvexities in the objective function by promoting the computation of descent directions, we heuristically keep the diagonal matrix $-\frac 2 \varepsilon I_{n_t\cdot l} + \Theta_u$ positive definite by setting any negative values to a small positive value $\gamma > 0$. This strategy was already implemented in \cite{garmatter2019improved} demonstrating very promising numerical performance.

From a computational point of view, the burden of any IPM lies in the solution of the Newton system at each iteration. Clearly, the developed MOR-IPM is expected to be more efficient than the full-IPM since the dominant state dimension is reduced as depicted in Figure \ref{Fig:MOR_Newton_schematic}.
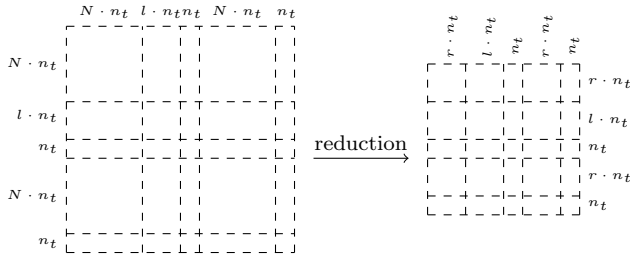
\begin{figure}[ht]
\centering
\begin{tikzpicture}[scale=0.5]
\newcommand{\Hone}{0.5}
\newcommand{\Htwo}{\Hone + 2}
\newcommand{\Hthree}{\Htwo + 0.5}
\newcommand{\Hfour}{\Hthree + 1}
\newcommand{\Hfive}{\Hfour + 2}
\newcommand{\Wone}{2}
\newcommand{\Wtwo}{\Wone + 1}
\newcommand{\Wthree}{\Wtwo + 0.5}
\newcommand{\Wfour}{\Wthree + 2}
\newcommand{\Wfive}{\Wfour + 0.5}
\draw[dashed] (0,0) -- (\Wfive,0);
\draw[dashed] (0,\Hone) -- (\Wfive,\Hone);
\draw[dashed] (0,\Htwo) -- (\Wfive,\Htwo);
\draw[dashed] (0,\Hthree) -- (\Wfive,\Hthree);
\draw[dashed] (0,\Hfour) -- (\Wfive,\Hfour);
\draw[dashed] (0,\Hfive) -- (\Wfive,\Hfive);
\draw[dashed] (0,0) -- (0,\Hfive);
\draw[dashed] (\Wone,0) -- (\Wone,\Hfive);
\draw[dashed] (\Wtwo,0) -- (\Wtwo,\Hfive);
\draw[dashed] (\Wthree,0) -- (\Wthree,\Hfive);
\draw[dashed] (\Wfour,0) -- (\Wfour,\Hfive);
\draw[dashed] (\Wfive,0) -- (\Wfive,\Hfive);
\node[left] at (0,0.5*\Hone) {\tiny $n_t$};
\node[left] at (0,3*\Hone) {\tiny $N\cdot n_t$};
\node[left] at (0,0.5*\Hthree) {\tiny $n_t$};
\node[left] at (0,0.25*\Hfour) {\tiny $l\cdot n_t$};
\node[left] at (0,3*\Hfour) {\tiny $N\cdot n_t$};
\node[above] at (0.5*\Wone,\Hfive) {\tiny $N\cdot n_t$};
\node[above] at (0.75*\Wtwo,\Hfive) {\tiny $l\cdot n_t$};
\node[above] at (0.9*\Wthree,\Hfive) {\tiny $n_t$};
\node[above] at (0.5*\Wfour,\Hfive) {\tiny $N\cdot n_t$};
\node[above] at (0.9*\Wfive,\Hfive) {\tiny $n_t$};
\draw[->] (\Wfive+0.5,\Htwo) -- (\Wfive+3,\Htwo) node [midway, above] {reduction};
\coordinate (O) at (\Wfive+3.5,1);
\newcommand{\HHone}{0.5}
\newcommand{\HHtwo}{\HHone + 1}
\newcommand{\HHthree}{\HHtwo + 0.5}
\newcommand{\HHfour}{\HHthree + 1}
\newcommand{\HHfive}{\HHfour + 1}
\newcommand{\WWone}{1}
\newcommand{\WWtwo}{\WWone + 1}
\newcommand{\WWthree}{\WWtwo + 0.5}
\newcommand{\WWfour}{\WWthree + 1}
\newcommand{\WWfive}{\WWfour + 0.5}
\draw[dashed] ($(O)$) -- ($(O)+(\WWfive,0)$);
\draw[dashed] ($(O)+(0,\HHone)$) -- ($(O)+(\WWfive,\HHone)$);
\draw[dashed] ($(O)+(0,\HHtwo)$) -- ($(O)+(\WWfive,\HHtwo)$);
\draw[dashed] ($(O)+(0,\HHthree)$) -- ($(O)+(\WWfive,\HHthree)$);
\draw[dashed] ($(O)+(0,\HHfour)$) -- ($(O)+(\WWfive,\HHfour)$);
\draw[dashed] ($(O)+(0,\HHfive)$) -- ($(O)+(\WWfive,\HHfive)$);
\draw[dashed] ($(O)$) -- ($(O)+(0,\HHfive)$);
\draw[dashed] ($(O)+(\WWone,0)$) -- ($(O)+(\WWone,\HHfive)$);
\draw[dashed] ($(O)+(\WWtwo,0)$) -- ($(O)+(\WWtwo,\HHfive)$);
\draw[dashed] ($(O)+(\WWthree,0)$) -- ($(O)+(\WWthree,\HHfive)$);
\draw[dashed] ($(O)+(\WWfour,0)$) -- ($(O)+(\WWfour,\HHfive)$);
\draw[dashed] ($(O)+(\WWfive,0)$) -- ($(O)+(\WWfive,\HHfive)$);
\node[right] at ($(O)+(\WWfive,0.5*\HHone)$) {\tiny $n_t$};
\node[right] at ($(O)+(\WWfive,2*\HHone)$) {\tiny $r\cdot n_t$};
\node[right] at ($(O)+(\WWfive,0.5*\HHthree)$) {\tiny $n_t$};
\node[right] at ($(O)+(\WWfive,2*\HHthree)$) {\tiny $l\cdot n_t$};
\node[right] at ($(O)+(\WWfive,2*\HHfour)$) {\tiny $r\cdot n_t$};
\node[above, rotate=90] at ($(O)+(\WWone,2.5*\HHfive)$) {\tiny $r\cdot n_t$};
\node[above, rotate=90] at ($(O)+(\WWtwo,2.5*\HHfive)$) {\tiny $l\cdot n_t$};
\node[above, rotate=90] at ($(O)+(1.2*\WWthree,2*\HHfive)$) {\tiny $n_t$};
\node[above, rotate=90] at ($(O)+(\WWfour,2.5*\HHfive)$) {\tiny $r\cdot n_t$};
\node[above, rotate=90] at ($(O)+(1.2*\WWfive,2*\HHfive)$) {\tiny $n_t$};
\end{tikzpicture}
\caption{Schematic dimension reduction from the Newton equation
with $\mathcal{N}$ in \eqref{eq:NewtonSystem} to the reduced one 
with $\mathcal{N}_{red}$ in \eqref{eq:red_NewtonSystem} obtained via balanced truncation.}
\label{Fig:MOR_Newton_schematic}
\end{figure}

\noindent We employ the following strategy to handle the linear algebra phase inside the IPMs: on the one hand we employ an inexact Krylov strategy for the solution of the Newton system and on the other hand we design a suitable preconditioner to speed up the convergence of our Krylov method of choice.
This strategy allows to implement the IPMs in a matrix-free manner so that the matrices defined by a Kronecker product need not be explicitly formed as the corresponding products can be performed by suitable multiplication functions using the Kronecker factors. Regarding the inexactness strategy, the idea is to increase the accuracy in the solution of the Newton equation as $\mu$ decreases in order to get savings in the computational time.
Global convergence results to a solution of the first-order optimality conditions for inexact IPMs can be found in \cite{bellavia1998inexact}. 
Finally, we remark that for the MOR-IPM, compared to the full-IPM, the inexactness strategy did not play an essential role and in fact in our numerical experiments the linear systems will be solved to high accuracy without affecting the overall cpu time.

\subsubsection{Preconditioning}
\label{sec:3_2_1_MINLPpaper}

Preconditioning is a crucial tool for accelerating the speed of convergence of any Krylov method. We here focus on the Newton scheme relying on the solution of the Newton system with coefficient matrix $\mcN$ given in \eqref{eq:NewtonSystem} but the same considerations can be applied to linear systems with $\mcNred$ in \eqref{eq:red_NewtonSystem} and are not reported for the sake of conciseness.

Given the partial observation problem the matrix $\Mt$ is indeed singular. Following the strategy in \cite{pearson2020interior}, a permutation of the Newton system results in the permuted saddle point system $\left[A \ B_2^T; B_1 \ D\right]$ with blocks
$$
\tiny 
A=
\begin{bmatrix}
\Kt&-\Phit&0\\
0&-\frac 2 \varepsilon I_{n_t \cdot l} + \Theta_u &0\\
0&0&\Theta_z\\
\end{bmatrix},
B_1=
\begin{bmatrix}
\Mt&0&0\\
0&\Cineq&I_{n_t}
\end{bmatrix},
B_2=
\begin{bmatrix}
0&-\Phit&0\\
0&\Cineq&I_{n_t}\\
\end{bmatrix}, 
D=
\begin{bmatrix}
\Kt\transp&0\\
0&0
\end{bmatrix}.
$$
Our preconditioning strategy is then based on approximating the block $A$ as well as the Schur-complement, which we approximate by


\begin{align*}
S&\approx
\begin{bmatrix}
\Kt\transp&0\\
0&0
\end{bmatrix}
-
\begin{bmatrix}
\Mt&0&0\\
0&\Cineq&I_{n_t}
\end{bmatrix}
\begin{bmatrix}
\Kt^{-1}&0&0\\
0&(-\frac 2 \varepsilon I_{n_t \cdot l} + \Theta_u)^{-1} &0\\
0&0&\Theta_z^{-1}\\
\end{bmatrix}
\begin{bmatrix}
0&0\\
-\Phit\transp&\Cineq\transp \\
0&I_{n_t}\\
\end{bmatrix}\\
&\approx
\begin{bmatrix}
\Kt\transp&0\\
0 &-\Theta_z^{-1}-\Cineq(-\frac 2 \varepsilon I_{n_t \cdot l} + \Theta_u)^{-1}\Cineq^{T},
\end{bmatrix},
\end{align*}
where in the first step we approximate the $(1,1)$-block by its diagonal and then in the last step ignore the $(2,1)$-block of the approximation to obtain a block-diagonal approximation of the Schur-complement. We embed this into the overall preconditioner obtained as 
\begin{align}
\mcP&=
\begin{bmatrix}
\Kt&0&0&0&0\\
0&(-\frac 2 \varepsilon I_{n_t \cdot l} + \Theta_u)&0&0&0\\
0&0&\Theta_z&0&0\\
\Mt&0&0&-\Kt\transp&0\\
0&\Cineq&0&0 &\Theta_z^{-1}+\Cineq(-\frac 2 \varepsilon I_{n_t \cdot l} + \Theta_u)^{-1}\Cineq^{T}
\end{bmatrix}.
\end{align}
Note that this is still a preconditioner for the permuted problem and following \cite{pearson2020interior} we can obtain its unpermuted version $\tilde{\mcP}$, given in an efficiently implemented version via
$$
\tilde{\mcP}^{-1}
\begin{bmatrix}
w_1\\
w_2\\
w_3\\
w_4\\
w_5\\
\end{bmatrix}=
\begin{bmatrix}
\Kt^{-1}w_4\\
C^{-1}w_2\\
\Theta_z^{-1}w_3\\
\Kt^{^{-T}}\left(-w_1+\Mt\Kt^{-1}w_4\right)\\
(\Theta_z^{-1}+\Cineq(-\frac 2 \varepsilon I_{n_t \cdot l} + \Theta_u)^{-1}\Cineq^{T})^{-1}(\Cineq C^{-1}w_2+w_5)\\
\end{bmatrix},
$$
which clearly shows that we need to solve once with $\tilde{K}$ and once with $\tilde{K}^T$.
We combine this preconditioner with the GMRES method of \cite{gmres}. We here focus on the highly relevant case of a partial observation domain, which as we already pointed out renders the matrix $\Mt$ singular. In case of a full observation, we proposed a preconditioner in \cite{garmatter2019improved} that we believe can be easily extended to the time-dependent case, see \cite{pearson2012regularization} for a preconditioning method for full observation optimization.

\section{Time-dependent Improved Penalty Algorithm (tIPA)}
\label{sec:4_MINLPpaper}

With the IPMs from the previous section at hand, we want to solve the overall MIPDECO problem. In order to do so, we adapt the improved penalty algorithm (IPA), developed in \cite{garmatter2019improved}, to this time-dependent setting. Before that, we make the following clarifying remark.

\begin{remark}
\label{rem:IPA}
\begin{enumerate}[(i)]
\item Applying the (soon described) IPA strategy to the MIPDECO problem \eqref{eq:MINLP_MINLPpaper} involves repeated solutions of the penalty formulation \eqref{eq:MINLP_penalty} where the full-IPM from Section \ref{sec:3_2_MINLPpaper} can be used.
\item In the same way, the IPA strategy can be applied to the reduced MIPDECO problem \eqref{eq:MINLP_red_MINLPpaper} which then involves solutions of the reduced penalty formulation \eqref{eq:MINLP_red_penalty} where the MOR-IPM from Section \ref{sec:3_2_MINLPpaper} can be used.
\item To avoid confusion, we will describe the IPA based on \eqref{eq:MINLP_MINLPpaper} and  \eqref{eq:MINLP_penalty}, but want to stress that the MOR version of the algorithm can be easily obtained by simply replacing the feasible sets $X$ and $W$ by $\Xred$ and $\Wred$ as well as replacing the linear map $f$ by $\fred$ throughout this section.
\item As a result, we will obtain two algorithms: one solving \eqref{eq:MINLP_MINLPpaper} and one solving \eqref{eq:MINLP_red_MINLPpaper}, where \eqref{eq:MINLP_red_MINLPpaper} approximates \eqref{eq:MINLP_MINLPpaper} and the approximation quality is based on the quality of our model order reduction.
\end{enumerate}
\end{remark}


\noindent We first extend the rounding strategy developed in \cite[Definition 3.4]{garmatter2019improved} to the time-dependent setting to again suitably handle the knapsack constraint in $X$ and $W$. The idea is to apply the previously developed strategy in each time step to the time-dependent control $u$.

\begin{definition}
\label{def:SR_MINLPpaper}
Letting $x = [y\transp,u\transp]\transp\in X$ and $S\in\N$, with $S\leq l$, we split up $u\in\R^{n_t\cdot l}$ into $u = (u_1\transp,\dots, u_{n_t}\transp)\transp$ with $u_i\in\R^l$ being the control coefficients representing the $i$-the time-step. We then apply the \emph{smart rounding} introduced in \cite[Definition 3.4]{garmatter2019improved} to every $u_i$, that is
\begin{itemize}
\item for $i=1,\dots ,n_t$:
\begin{itemize}
\item Let $u_{S,i}\in\R^S$ denote the $S$ largest components of $u_i$. 
\item Define $\left[u_i \right]_{SR}$ by rounding $u_{S,i}$ component-wise to the closest integer and set the remaining components to $0$.
\end{itemize}
\item Define $\left[u \right]_{SR} := \left(\left[u_1 \right]_{SR}\transp, \dots , \left[u_{n_t} \right]_{SR}\transp\right)\transp$.
\item Define $[x]_{SR} := \left(f([u]_{SR})\transp, [u]_{SR}\transp\right)\transp \in W$.
\end{itemize}
\end{definition}
We illustrate this rounding concept by the following simple example involving only control values. We will see that the smart rounding does, by definition, satisfy the knapsack constraint, while the usual rounding to the closest integer may fail to do so.

\begin{example}
Let $S=2$, $l=3$, $n_t = 2$, and let $[\cdot]$ denote the usual rounding to the closest integer. Then, for 
\[
v = \VECCTOR{0.8}{0.7}{0.1}{0.3}{0.6}{0.9}\quad\mbox{and}\quad w = \VECCTOR{0.63}{0.62}{0.61}{0.3}{0.6}{0.9}
\]
it is $v_1 = \VECTOR{0.8}{0.7}{0.1}$ and $v_2 = \VECTOR{0.3}{0.6}{0.9}$ such that
\[
[v]_{SR} = ([v_1]_{SR}\transp, [v_2]_{SR}\transp)\transp = \VECCTOR{1}{1}{0}{0}{1}{1} = [v],
\] 
but with $w_1 = \VECTOR{0.63}{0.62}{0.61}$ and $w_2 = \VECTOR{0.3}{0.6}{0.9}$ it is 
\[
[w]_{SR} = ([w_1]_{SR}\transp, [w_2]_{SR}\transp)\transp = \VECCTOR{1}{1}{0}{0}{1}{1} \neq [w] = \VECCTOR{1}{1}{1}{0}{1}{1}.
\]
\end{example}

\noindent In \cite{garmatter2019improved}, the starting point for the development of the IPA was an exact penalty (EXP) algorithm initially reported in \cite{Lucidi_2011}. Such an EXP algorithm can, analogously to \cite{garmatter2019improved}, be formulated for the time-dependent setting presented in this article. 
Furthermore, the convergence property for this EXP algorithm is analogous to the one derived in \cite[Prop. 3.6]{garmatter2019improved}, where the only necessary theoretical update is an equivalent of \cite[Prop. 3.5]{garmatter2019improved} for the time-dependent setting. This can easily be obtained: the first half of the the proof of \cite[Prop. 3.5]{garmatter2019improved} directly carries over and in the second half, when arguing how any $\tilde{z}\in W$ can be obtained from $\bar z\in W$, one has to consider additional cases due to the fact that, in the time-dependent setting, the knapsack constraint might be satisfied as an equality in some timesteps and  as an inequality in some other timesteps.

Since the IPA slightly deviates from the EXP algorithm (such that the convergence properties do not directly apply to, but rather support the IPA) and to keep the manuscript length healthy, we decided to spare the details regarding the EXP algorithm and its convergence property. Instead, we adapt the IPA to the time-dependent setting in the following section, review its properties and discuss potential perturbation strategies.

\subsection{The algorithm and its details}
\label{sec:4_2_MINLPpaper}


The key idea of the IPA was to suitably adapt the framework of the EXP algorithm  reported in \cite[Algorithm 3.1]{garmatter2019improved} and originally developed in \cite{Lucidi_2011}. The EXP algorithm repeatedly solves the penalty formulation \eqref{eq:MINLP_penalty} and provides a theoretical framework that tell us when to increases the amount of penalization in the objective function and when to search for a better minimizer. In order to obtain its theoretical convergence properties, the EXP algorithm, at each iteration, requires the use of a global optimization solver. This makes the algorithm impractical in a large-scale PDE constrained optimization setting. As a consequence, the IPA employed one main change:
the next iterate in the IPA only has to reduce the objective function (in the EXP algorithm, it had to be a global minimum up to a tolerance $\delta$). This next iterate is searched for via an a probabilistic approach combining a tailored local search strategy with a perturbation of the current iterate (see Sub-Algorithm \ref{algo:Perturbation} below).
For the sake of completeness, we report the \emph{time-dependent improved penalty algorithm} (tIPA), i.e., the combination of Algorithms \ref{algo:some_Bologna_Algorithm} and \ref{algo:Perturbation}, where the time-dependent nature lies in the smart rounding introduced in Definition \ref{def:SR_MINLPpaper} as well as the underlying model problems \eqref{eq:MINLP_MINLPpaper} and \eqref{eq:MINLP_penalty} with their feasible sets $W$ and $X$, respectively.

\begin{algorithm}[ht]
\caption{tIPA($x^0\in X$, $\varepsilon^0 >0$, $\sigma\in (0,1)$, $\pmax\in\N$)}
\begin{algorithmic}[1]
\label{algo:some_Bologna_Algorithm}
\STATE{$n = 0$, $x^n = x^0$, $\varepsilon^n = \varepsilon^0$}
\STATE{\textbf{Step 1.} Call Algorithm \ref{algo:Perturbation}($x^n,~\pmax,~\varepsilon^n$) to generate a new iterate $x^{n+1}$}.
\STATE{\textbf{Step 2.}}
\IF{$x^{n+1}\notin W$ \textbf{and} $\Jt(x^{n+1};\varepsilon^n) - \Jt([x^{n+1}]_{SR};\varepsilon^n) \leq \varepsilon^n \norm{x^{n+1} - [x^{n+1}]_{SR}}_2$}
\STATE{$\varepsilon^{n+1} = \sigma\varepsilon^n$}
\ELSE
\STATE{$\varepsilon^{n+1} = \varepsilon^n$}
\ENDIF
\STATE{\textbf{Step 3.}}
\IF{$x^n = x^{n+1}$}
\RETURN{$[x^{n+1}]_{SR}$}
\ELSE
\STATE{Set $n = n+1$ and go to Step 1.}
\ENDIF
\end{algorithmic}
\end{algorithm}

\begin{algorithm}[ht]
\renewcommand{\thealgorithm}{\ref{algo:some_Bologna_Algorithm}.a} 
\caption{Reduction via perturbation($x\in X$, $\pmax\in\N$, $\varepsilon >0$)}
\begin{algorithmic}[1]
\label{algo:Perturbation}
\STATE{$x^{init} = x$}
\FOR{$j=1,\dots, \pmax$}
\STATE{Use a local optimization solver to determine a solution $x^{loc}$ of \eqref{eq:MINLP_penalty} for $\varepsilon$ using $x^{init}$ as initial guess.}
\IF{$\Jt(x^{loc};\varepsilon) < \Jt(x;\varepsilon)$}
\RETURN{$x^{loc}$}
\ELSE
\STATE{Generate a point $x^{pert} = \text{Perturbation}(x^{loc})$  and set $x^{init} = x^{pert}$.}
\ENDIF
\ENDFOR
\RETURN{$x$}
\end{algorithmic}
\end{algorithm}

\noindent In the following, we list some of the key features that the tIPA inherits from the IPA, as they are structurally identical. For a more detailed discussion and interpretation of the algorithm we refer to \cite[Section 3.2]{garmatter2019improved}.
\begin{itemize}
\item The tIPA terminates via line $11$ as soon as the iteration limit $\pmax$ is reached inside Algorithm \ref{algo:Perturbation} at Step 1. Thus, the choice of $\pmax$ and the perturbation strategy determine the quality of the solution found by the tIPA. We will discuss our perturbation strategies in the second part of this section.
\item The tIPA is expected to have a two-phase behavior: in the first phase, the penalization is increased due to line $5$ of Algorithm \ref{algo:some_Bologna_Algorithm} until a feasible integer iterate $x^{n+1}\in W$ is found and in this phase the for-loop of Algorithm \ref{algo:Perturbation} should terminate in the first iteration. In the second phase, Algorithm \ref{algo:Perturbation} is then the driving force in finding better points that provide further reductions in the objective function. 
\item A new iterate 
is always feasible with $x^{n+1}\in X$. Thus, $x^{n+1}\notin W$ in line $4$ of Algorithm \ref{algo:some_Bologna_Algorithm} can, in a practical  implementation, be replaced by 
$$\norm{u^{n+1} - [u^{n+1}]_{SR}}_\infty > \efeas$$
with a feasibility tolerance $\efeas$. Thus, it is reasonable to return $[x^{n+1}]_{SR}$ such that the control of our output iterate is always integer and satisfies the knapsack constraint.
\end{itemize}

\noindent Inside the tIPA, we use the full-IPM developed in Section \ref{sec:3_2_MINLPpaper} to obtain a (local) solution of \eqref{eq:MINLP_MINLPpaper} in Algorithm \ref{algo:Perturbation}. As mentioned in Remark \ref{rem:IPA}, the tIPA can also be formulated for the reduced problem \eqref{eq:MINLP_red_MINLPpaper}, where the MOR-IPM from Section \ref{sec:3_2_MINLPpaper} is then used inside Algorithm \ref{algo:Perturbation} to obtain a (local) solution of \eqref{eq:MINLP_red_MINLPpaper} and we call the resulting algorithm the MOR-tIPA.

The perturbation performed in line 7 plays an important role in the overall strategy. Some tailored perturbation, depending on the problem one intends to solve, might be more beneficial in the end. We hence want to conclude this section with a discussion on the perturbation strategies that we will employ inside Algorithm \ref{algo:Perturbation} during our numerical investigation in the next section.  We present two perturbation strategies: the first one being the extension of \cite[Algorithm 2.b]{garmatter2019improved} to the time-dependent setting, that is we \emph{flip} $\theta\in\N$ many sources in each time step of the control to generate the perturbed control. The corresponding state is then calculated afterwards and the details are described in Algorithm \ref{algo:flipping_1}.

\begin{algorithm}[ht]
\renewcommand{\thealgorithm}{\ref{algo:some_Bologna_Algorithm}.b} 
\caption{Perturbation($x\in X$)}
\begin{algorithmic}[1]
\label{algo:flipping_1}
\STATE{Split $x=(y\transp,u\transp)\transp$ into the state $y\in\R^{n_t\cdot N}$ and control $u = (u_1\transp,\dots, u_{n_t}\transp)\transp\in\R^{n_t\cdot l}$. Define $u^{pert} := u$.}
\FOR{$i=1,\dots, n_t$}
\STATE{Find $I_{S}$, the set of indices of the entries of $u_i$ that are larger than $\frac{1}{2}$.}
\FOR{$j=1,\dots, \min\{\vert I_S\vert,\theta\}$}
\STATE{Randomly select $\hat{\imath}\in I_{S}$.}
\STATE{Define $I_{adj}$ the set of indices corresponding to sources \emph{adjacent} to $\tilde{x}_{\hat{\imath}}$.}
\STATE{Randomly select $\hat{\imath}_{adj}\in I_{adj}$.}
\STATE{Set $\left(u_i^{pert}\right)_{\hat{\imath}}$ to a randomly chosen value smaller than $\frac{1}{2}$.}
\STATE{Set $\left(u_i^{pert}\right)_{\hat{\imath}_{adj}}$ to a randomly chosen value larger than $\frac{1}{2}$.}
\STATE{Remove $\hat{\imath}$ from $I_{S}$.}
\ENDFOR
\ENDFOR
\STATE{Compute $y^{pert} = f(u^{pert})$ if called inside the tIPA or $y^{pert} = \fred(u^{pert})$ if called inside the MOR-tIPA.}
\RETURN{$x^{pert} := \left[(y^{pert})\transp,(u^{pert})\transp\right]\transp$}
\end{algorithmic}
\end{algorithm}

\noindent When Algorithm \ref{algo:flipping_1} is called inside the tIPA, $x$ is equal to the current iterate $x^n$. 
The algorithm then performs $n_t\cdot\theta\in\N$ \emph{flips} to the current control $u^n$ ($\theta$ flips per time step of the control), where a flip is one iteration of the inner for-loop of Algorithm \ref{algo:flipping_1}.
Before we discuss the second perturbation strategy, we report here our definition of adjacency from \cite[Definition 3]{garmatter2019improved}.

\begin{definition}
\label{def:adjind_MINLPpaper}    
Given a collection of points $x_1,\dots ,x_n \in \Omega$ and a radius $r > 0$, we define for a point $x_i$ the set of \emph{adjacent indices}
$$
I_{adj} := \{j\in \{1,\dots, n\} \mid j\neq i,~ \norm{x_i - x_j}_\infty \leq r\}.
$$
\end{definition}

\noindent As a result, the set of adjacent indices $I_{adj}$ in Algorithm \ref{algo:flipping_1} is obtained via Definition \ref{def:adjind_MINLPpaper} with the centers $\tilde{x}_1,\dots, \tilde{x}_l\in \Omega$ of our source functions as points. Assuming that they are arranged in a uniform $m\times m$ grid, a possible radius might be $r = \frac{1}{m}$.

Algorithm \ref{algo:flipping_1} performs $\theta$ flips per time step, which may be disadvantageous: for large $n_t$ the total amount of flips may become very large and since flips are being made in every time step, the resulting perturbation may be too far away from the current iterate to yield a productive initial guess for the local solver in Algorithm \ref{algo:Perturbation}. As a result, the overall perturbation strategy may be unable to find new iterates that improve the objective function.
We thus propose a second strategy that simply performs a fixed amount of $\theta\in\N$ flips randomly spread out over the time steps. The details are found in Algorithm \ref{algo:flipping_2}.

\begin{algorithm}[ht]
\renewcommand{\thealgorithm}{\ref{algo:some_Bologna_Algorithm}.c} 
\caption{Perturbation($x\in X$)}
\begin{algorithmic}[1]
\label{algo:flipping_2}
\STATE{Split $x=(y\transp,u\transp)\transp$ into the state $y\in\R^{n_t\cdot N}$ and control $u\in\R^{n_t\cdot l}$. Define $u^{pert} := u$.}
\STATE{Find $I_{S}$, the set containing the indices of the entries of $u$ that are larger than $\frac{1}{2}$.}
\FOR{$j=1,\dots, \min\{\vert I_S\vert,\theta\}$}
\STATE{Randomly select $\hat{\imath}\in I_{S}$.}
\STATE{Define $I_{adj}$ the set of indices corresponding to sources \emph{adjacent} to $\tilde{x}_{\hat{\imath}}$.}
\STATE{Randomly select $\hat{\imath}_{adj}\in I_{adj}$.}
\STATE{Set $\left(u_i^{pert}\right)_{\hat{\imath}}$ to a randomly chosen value smaller than $\frac{1}{2}$.}
\STATE{Set $\left(u_i^{pert}\right)_{\hat{\imath}_{adj}}$ to a randomly chosen value larger than $\frac{1}{2}$.}
\STATE{Remove $\hat{\imath}$ from $I_{S}$.}
\ENDFOR
\STATE{Compute $y^{pert} = f(u^{pert})$ if called inside the tIPA or $y^{pert} = \fred(u^{pert})$ if called inside the MOR-tIPA.}
\RETURN{$x^{pert} := \left[(y^{pert})\transp,(u^{pert})\transp\right]\transp$}
\end{algorithmic}
\end{algorithm}

\noindent With Algorithm \ref{algo:flipping_2} one has much better control over the amount of flips resulting in the perturbation $x^{pert}$. Thus, the hope is to find a balanced $\theta$ such that the resulting perturbations lie outside the current basin of attraction of the objective functional and therefore are a qualitative initial guess for the local solver in Algorithm \ref{algo:Perturbation}, resulting in a point with a potentially better objective function value. We note that with the notion of adjacency from Definition \ref{def:adjind_MINLPpaper} the output of Algorithm \ref{algo:flipping_2} does again satisfy the knapsack constraint.


Although the perturbation strategies presented depend on the uniform grid of source centers used to determine the index set $I_{adj}$, we want to stress that the underlying concept of this \emph{flipping} does not depend on the chosen modelling as it was outlined in \cite[Section 4.1]{garmatter2019improved} alongside other details of our implementation that we do not repeat here.

\section{Numerical Experiments}
\label{sec:5_MINLPpaper}

In this numerical section, we first investigate the effectiveness of the model order reduction as an approximation technique and then we test the tIPA and the MOR-tIPA also in comparison with \cplexmiqp, the \BnB routine of CPLEX \cite{CPLEX} for quadratic mixed integer problems. Before that,
we introduce a second model problem  based on a convection-diffusion PDE for which most of the numerical tests will also be carried out.

Consider the original optimal control problem \eqref{eq:MINLP_cont_parabolicMINLPpaper}, but governed by the parabolic convection-diffusion PDE
\begin{align}
\label{eq:PDE_CD_MINLPpaper}
\begin{split}
\frac{\partial}{\partial t} y(t,x)-\Delta y(t,x) + w(x)\cdot\nabla y(t,x) = \sum_{i=1}^l u^i(t) \chi_i(x),\quad (t,x)\in(0,T)\times \Omega,\\
y(0,x) = 0,\quad x\in \oO,
\end{split}
\end{align}
with the constant-in-time wind vector $w(x) = (2x_2(1-x_1^2), -2x_1(1-x_2^2))\transp$ and piecewise constant source functions $\chi_1,\dots ,\chi_l\in L^2(\Omega)$ that have the same height $\kappa$ as the gaussian source functions defined in \eqref{eq:gaussian_sources_parabolicMINLPpaper}. 
Using Q1 finite elements, while also employing the Streamline Upwind Petrov-Galerkin (SUPG) \cite{brooks1982streamline} upwinding scheme as implemented in the {\tt IFISS} software package \cite{elman2007algorithm} to discretize \eqref{eq:PDE_CD_MINLPpaper} and building the relevant finite element matrices, the semidiscretization in space is achieved. Following the approach made in Section \ref{sec:2_2_MINLPpaper}, we then obtain the resulting discretized optimal control problem, its continuous relaxation and its penalty formulation such that experiments can be carried out for this model problem as well.

In the following, we refer to this problem as the \emph{convection-diffusion} problem, while the previously considered problem is the \emph{Poisson} problem.

\subsection{Numerical setting and parameter choices}
\label{sec:5_1_MINLPpaper}

We present the numerical setting for the experiments  including default parameter choices for the algorithms. If different choices are used, it will be mentioned.

We choose $\Omega := [0,1]^2$ as our computational domain, $\Oobs := [0.25,0.5]^2$ as the domain of observation, and $[0,1]$ as the time horizon. Regarding the source functions, we choose $l=25$ sources with centers $\tilde{x}_1,\dots , \tilde{x}_l$ being arranged in a uniform $5\times 5$ grid with step size $\frac{1}{6}$ (resulting in a radius of $\frac{1}{5}$ for Definition \ref{def:adjind_MINLPpaper}). For the piece-wise constant sources of the convection-diffusion problem the points $\tilde{x}_1,\dots , \tilde{x}_l$ are the centers of the squares $\Omega_1,\dots, \Omega_l\subset\Omega$ that form a uniform decomposition of $\Omega$. The height of the sources is $\kappa = 100$ and the width $\omega$ of the Gaussian sources is chosen such that every source takes $5\%$ of its center-value at a neighboring center. We mention that this choice of height and width is motivated by \cite[Section 4.2]{wesselhoeft2017mixed}. The PDE \eqref{eq:PDE_parabolicMINLPpaper} is discretized using uniform piece-wise linear finite elements in space with a step size of $2^{-6}$ (unless specified otherwise) resulting in $N=4225$ vertices (the same step size is used for the aforementioned discretization of \eqref{eq:PDE_CD_MINLPpaper}). For the temporal dimension, we stick to an equidistant grid with $n_t = 40$ timesteps such that the overall problem consists of $N\cdot n_t = 169000$ continuous and $l\cdot n_t = 1000$ integer variables.

Regarding the full-IPM and the MOR-IPM, the outer interior point iteration is stopped as soon as either
$\max\{\norm{\xi_p}_2, \norm{\xi_d}_2, \norm{\xi_c}_2\} \leq 10^{-6}$
or the safeguard $\mu \leq 10^{-15}$ is triggered.
Furthermore, starting from an initial $\mu=1$ we decrease $\mu$ by the factor $0.1$ in each outer interior point iteration. 
The inexactness for the full-IPM is implemented by stopping GMRES when the norm of the unpreconditioned relative residual is below $\eta = \max\{\min\{10^{-4}, \mu\}, 10^{-10}\}$, while for the MOR-IPM we always use $\eta = 10^{-10}$.
Finally, the diagonal block $ -\frac 2 \varepsilon I_l + \Theta_u $ in either Newton system \eqref{eq:NewtonSystem} or \eqref{eq:red_NewtonSystem} is kept positive definite by setting any negative values to $\gamma = 10^{-6}$.


 Regarding the balanced truncation introduced in Section \ref{sec:3_1_1_MINLPpaper}, we solve  the Lyapunov equations \eqref{eq:LyapunovEQ}  via the {\tt mess\_lyap} routine of the M-M.E.S.S. toolbox \cite{MMESS} for Matlab, using the default setting.
This routine computes low-rank approximations $\hat R \hat R^T \approx P$ and  $\hat L \hat L^T \approx Q$ and we use $\hat R$ and $\hat L$ which approximate the Cholesky factors $R$ and $L$, respectively.

Default parameters for the tIPA and MOR-tIPA are $\varepsilon^0 = 10^6$, $\sigma = 0.5$, and the feasibility tolerance $\efeas = 0.1$.
Both algorithms use the respective solution of \eqref{eq:MINLP_contrelax_MINLPaper} or \eqref{eq:MINLP_red_penalty} for $\varepsilon^0$ as initial guess. Do note that this is not necessary since both problems, for large enough $\varepsilon^0$, are usually still convex such that any initial guess would be sufficient.

Regarding \cplexmiqp, we use default options except that we set a time limit of $50$ hours and a memory limit of $32000$ megabytes for the search tree.

All experiments were conducted on a PC with 32 GB RAM and a QUAD-Core-Processor INTEL-Core-I7-4770 (4x 3400MHz, 8 MB Cache) utilizing Matlab 2021a via which CPLEX 12.9.0 was accessed.

\subsection{The experiments}
\label{sec:5_2_MINLPpaper}

\paragraph{First experiment} 

In this first experiment we determine a good choice of the reduced dimension $r$ for both model problems in question.
Using the M-M.E.S.S. toolbox, we can calculate an approximation to the first (and thus largest) $\Nt < N$ Hankel singular values $\sigma_1,\dots ,\sigma_{\Nt}$. Due to the theoretical investigations in Section \ref{sec:3_1_2_MINLPpaper}, we are interested in the quantity $\Sigma(r) := \sigma_{r+1} + \dots + \sigma_{\Nt}$ that is the dominant term in the balanced truncation error bounds. 
Figure \ref{fig:First_experiment_figure_MINLPpaper} depicts $\Sigma(r)$ over a possible reduced dimension $r=1,\dots, \Nt-1$ for both the Poisson and the convection-diffusion problem.
We note that this neglects the singular values $\sigma_{\Nt+1},\dots, \sigma_N$ but as the Hankel singular values are sorted in descending order it becomes clear from Figure \ref{fig:First_experiment_figure_MINLPpaper} that it is indeed justified to neglect them.

\begin{figure}[ht]
	\centering
	\includegraphics[height=0.2\textheight]{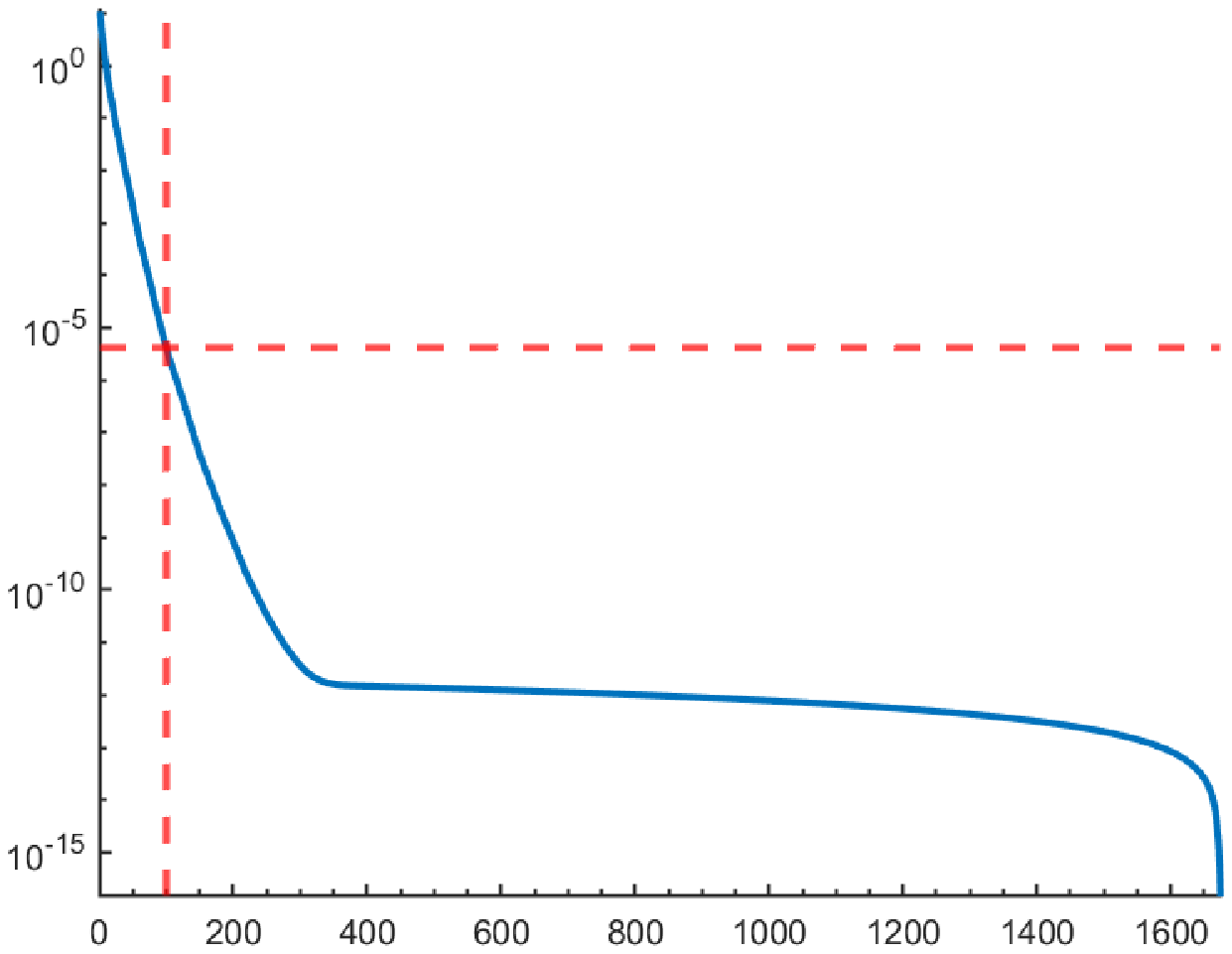}\qquad
	\includegraphics[height=0.2\textheight]{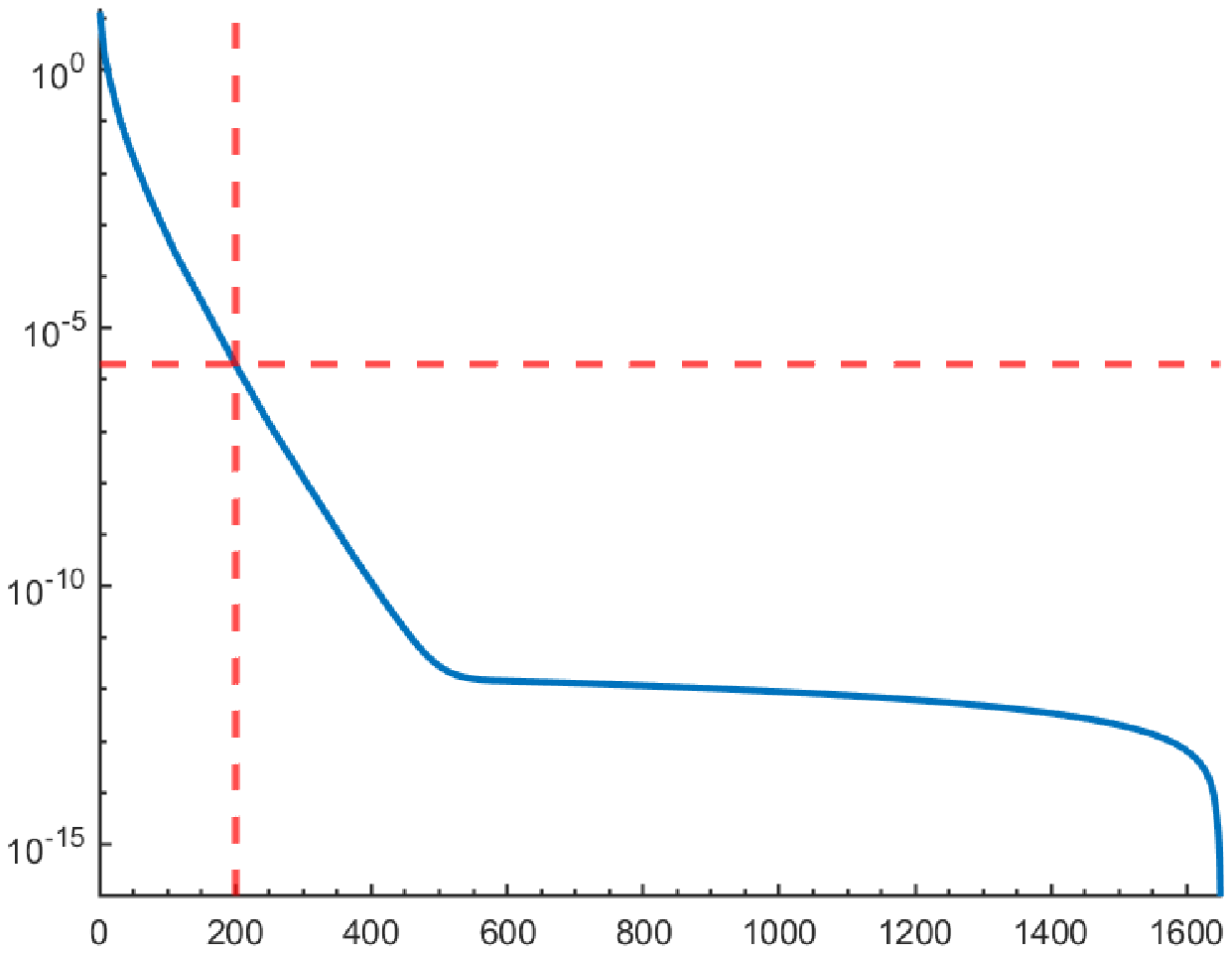}
	\caption{$\Sigma(r)$ over $r=1,\dots, \Nt-1$ for the Poisson (left) and the  convection-diffusion (right) problem. Red dotted lines indicate the chosen reduced dimensions for the later numerical experiments.}
	\label{fig:First_experiment_figure_MINLPpaper}
\end{figure}

\noindent Based on these calculations, we choose a reduced dimension $r=100$ for the Poisson problem and, to keep a similar approximation quality, $r=200$ for the convection-diffusion problem. We stress that even though the convection-diffusion problem requires a larger reduced dimension, the factor of reduction from the full state dimension $N=4225$ to the reduced dimension $200$ is still noticeable.

\paragraph{Second experiment}
Following up on the first experiment, we are now interested in the selection of the reduced dimension $r$ that is required to obtain the approximation quality of $\Sigma(r) \leq 10^{-5}$ if the FEM step size is changing. This is then an indicator on how robust our MOR approach is. We thus calculate the value of $r$ for which $\Sigma(r) \leq 10^{-5}$ for a decreasing FEM step size of $h=2^{-4},~2^{-5},~2^{-6},~2^{-7}$ for both the Poisson and the convection-diffusion problem and the result is depicted in Table \ref{table:Second_experiment_table_MINLPpaper}.

\begin{table}[ht]
\centering
\begin{tabular}{c|c|c|c|c}
\toprule
$h$ & $2^{-4}$ & $2^{-5}$ & $2^{-6}$ & $2^{-7}$ \\
\hline
Poisson & 87 & 93 & 93 & 97 \\
Conv-Diff & 102 & 146 & 172 & 190\\
\bottomrule
\end{tabular}
\caption{Results of the second experiment. Reduced dimension $r$ such that $\Sigma(r) \leq 10^{-5}$ over a changing FEM step size $h$ for both model problems.}
\label{table:Second_experiment_table_MINLPpaper} 
\end{table}

\noindent Clearly, the reduced dimension $r$ required for the desired accuracy of the reduced model is robust w.r.t. the FEM step size for the Poisson problem. For the convection-diffusion problem the required reduced dimension does increase.
Internal tests showed that this is not due to the convection term (the convection would become more and more challenging for the MOR the smaller the diffusion coefficient would be) but rather due to the piece-wise constant source functions. Since we are still satisfied with the factor of reduction that is achieved, we did not further investigate this matter.

\paragraph{Third experiment} 
We carry out a first comparison of the tIPA and the MOR-tIPA, where we have two aims:
\begin{itemize}
\item Observing that both algorithms yield pretty much the same solution. Clearly, there might be slight differences due to the probabilistic nature of the IPA framework, but we want to notice that the MOR approach does not negatively influence the quality of the solution found. 
\item Investigating the behaviour of the preconditioner inside the full-IPM as well as the MOR-IPM during a tIPA iteration.
\end{itemize}

To this end, we construct a single problem instance in the following way: we generate a desired state $\ybar$ as a solution of (the discretized version of) \eqref{eq:PDE_parabolicMINLPpaper} with $S=3$ active sources in the right-hand side and the centers of these sources are randomly distributed over $[0.1,0.9]^2$, where the height and width of these sources coincides with the values presented in Section \ref{sec:5_1_MINLPpaper}. In the same fashion, a problem instance is drawn for the convection-diffusion problem. 

We now solve each problem instance with the tIPA as well as the MOR-tIPA, where we always use Algorithm \ref{algo:flipping_1} for the perturbation strategy, perturbing $\theta = 1$ source per timestep and limiting the overall perturbation cycle inside Algorithm \ref{algo:Perturbation} to $\pmax = 1000$ iterations.
We are interested in the number of nonlinear (outer) iterations (NLI) required by IPM and the average number of preconditioned GMRES iterations (aGMRES) for each value of $\varepsilon$ visited during the two versions of the IPA algorithm. The result is depicted in Figure \ref{fig:Third_experiment_figure_MINLPpaper}. 

\begin{figure}[ht]
	\centering
	\includegraphics[height=0.4\textheight]{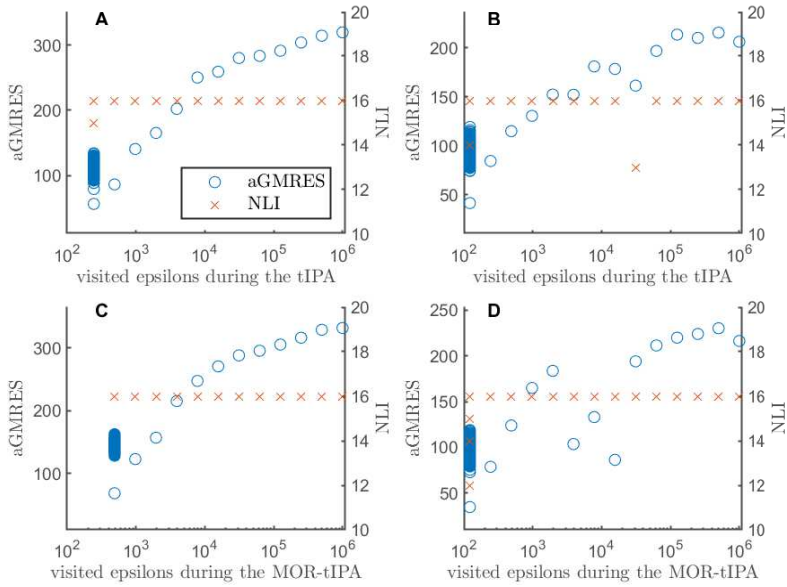}
	\caption{Number of outer IPM iterations (right y-axis) and average GMRES iterations (left y-axis) during the tIPA for the Poisson (\textbf{A}) and convection-diffusion (\textbf{B}) problem as well as the MOR-tIPA for the Poisson (\textbf{C}) and convection-diffusion (\textbf{D}) problem.}
	\label{fig:Third_experiment_figure_MINLPpaper}
\end{figure}

\noindent We have, for the solutions $\xtIPA$ and $\xMORtIPA$ of the Poisson problem, the objective function values
\begin{align*}
&\tilde{J}(\xtIPA) \approx 0.00496,\quad \tilde{J}(\xMORtIPA) \approx 0.00495, \quad \mbox{and}\\
&\norm{\tilde{J}(\xtIPA) - \tilde{J}(\xMORtIPA)}/\norm{\tilde{J}(\xtIPA)} \approx 0.00131.
\end{align*}
For the convection-diffusion problem, we have
\begin{align*}
&\tilde{J}(\xtIPA) \approx 0.0571,\quad \tilde{J}(\xMORtIPA) \approx 0.0556, \quad \mbox{and}\\
&\norm{\tilde{J}(\xtIPA) - \tilde{J}(\xMORtIPA)}/\norm{\tilde{J}(\xtIPA)} \approx 0.0263.
\end{align*}
Clearly, the obtained solutions for these problem instances are of high quality and the solutions obtained using the MOR-tIPA are even slightly better than the 
ones obtained with the tIPA.

Regarding the solution time, the tIPA required $55.57$ hours for the Poisson problem and $46.84$ hours for the convection-diffusion problem, where the MOR-tIPA only required $3.33$ and $11.73$ hours, respectively. The increased time in the MOR-tIPA for the convection-diffusion problem is due to the larger  dimension of the reduced problem. The results illustrate the efficiency of the MOR approach.
Possible  improvements made to the preconditioner would lead to a further reduction of the computing time, especially noticeable for the full tIPA. This is backed up by the average GMRES iterations depicted in Figure \ref{fig:Third_experiment_figure_MINLPpaper}: while they may be in a reasonable range for smaller values of $\epsilon$ (multiple blue circles for a singular value of $\varepsilon$ are due to the perturbation step), a still large amount of GMRES iterations is required in the first iterations of both tIPA and MOR-tIPA. 

\paragraph{Fourth experiment} 

The perturbation strategy significantly impacts the quality of the overall algorithm. Therefore, we 
want to determine a qualitative strategy
in this experiment. 
To keep the manuscript length as well as the computational times healthy, this comparison is only carried out using the MOR-tIPA applied to the Poisson problem.
We distinguish the following four variants:
\begin{itemize}
\item Variant 1 (V1): the perturbation strategy from Algorithm \ref{algo:flipping_1} is used with $\theta = 1$ 
perturbation per timestep.
\item Variants 2-4 (V2-V4): the perturbation strategy from Algorithm \ref{algo:flipping_2} is used with a total of 
$\theta \in \{\lceil\frac{n_t\cdot S}{20}\rceil,~\lceil\frac{n_t\cdot S}{10}\rceil,~\lceil\frac{n_t\cdot S}{5}\rceil\}$ many perturbations. Thus, a total amount of $5\%,~10\%,~20 \%$ of the active sources is perturbed.
\end{itemize}
In each variant, we select $\pmax = 1000$ to keep a reasonable balance between computational cost of the overall algorithm and the solution quality (of course, a larger $\pmax$ will on average always improve the solution quality due to the probabilistic search approach).
For the comparison, we construct a test set of $10$ problem instances per value of $S\in\{1,2,3,4,5\}$  (we described in the previous experiment how such a problem instance is created) and it is clear that with an increased $S$ the combinatorial complexity and thus the difficulty of the MIPDECO problem increases.

We then solve this test set with the algorithms under analysis (the variants of the MOR-tIPA) and compare the results with respect to solution time and quality. For the solution time, we report 't\_av' the average solution time and for the solution quality, we choose the following two criteria.
\begin{itemize}
	\item 'min\_count': for each desired state, we check which algorithm achieved the smallest objective function value. This algorithm is then awarded a score. Surely, multiple algorithms can be awarded a score in the same run (when multiple algorithms find the same 'best' solution).
	\item 'rel\_err\_av': for each desired state, we store for each algorithm the relative error between the objective function value achieved by that algorithm and the smallest objective function value in that run (the one that was awarded a 'min\_count'-score). Only runs resulting in a non-zero relative error are taken into account when computing this average relative error.
\end{itemize}
Since the global minimum of the tackled optimization problem is not known analytically, the 'min\_count'-value tells us how often an algorithm performed best compared to the other algorithms and the average relative error is an additional measure of quality.
Furthermore, we collect 'av\_subsolvercalls' the average amount of calls of the local solver to understand how good the perturbation strategy is (the closer this value is to $\pmax = 1000$, the less effective the perturbation strategy is).
The results of this experiment can be found in Table \ref{table:Fourth_experiment_table_MINLPpaper}.

\begin{table}[ht]
\centering
\resizebox{\textwidth}{!}{%
\begin{tabular}{c |ccccc |ccccc |ccccc|ccccc }
\toprule
\multicolumn{1}{c}{} & \multicolumn{5}{|c}{t\_av (h)} & \multicolumn{5}{|c}{min\_count} & \multicolumn{5}{|c}{rel\_err\_av ($\%$)} & \multicolumn{5}{|c}{av\_subsolvercalls} \\
\midrule
S & 1 & 2 & 3 & 4 & 5 & 1 & 2 & 3 & 4 & 5 & 1 & 2 & 3 & 4 & 5 & 1 & 2 & 3 & 4 & 5\\
\midrule
MOR-tIPA V1 & 1.41 & 2.09 & 2.37 & 2.43 & 2.24 & 5 & 4 & 4 & 4 & 4 & 17.54 & 14.64 & 12.52 & 8.61 & 5.45 & 1014 & 1015 & 1014 & 1014 & 1014 \\
MOR-tIPA V2 & 0.72 & 1.12 & 1.26 & 1.22 & 1.35 & 7 & 10 & 10 & 8 & 9 & 5.75 & 0.00 & 0.00 & 4.58 & 1.40 & 1132 & 1427 & 1399 & 1164 & 1200\\
MOR-tIPA V3 & 0.89 & 1.49 & 1.74 & 1.82 & 1.67 & 8 & 4 & 4 & 6 & 4 & 21.20 & 11.75 & 10.05 & 6.86 & 5.31 & 1221 & 1427 & 1180 & 1124 & 1016 \\
MOR-tIPA V4 & 0.94 & 1.36 & 2.00 & 2.12 & 2.32 & 5 & 4 & 4 & 4 & 5 & 15.86 & 14.64 & 12.52 & 8.61 & 5.29 & 1017 & 1015 & 1014 & 1014 & 1075 \\
\bottomrule
\end{tabular}
}
\caption{Results of the fourth experiment. Comparison of different MOR-tIPA variants for different values of $S$.}
\label{table:Fourth_experiment_table_MINLPpaper} 
\end{table}

\noindent The major takeaway from Table \ref{table:Fourth_experiment_table_MINLPpaper} is that the second variant (using Algorithm \ref{algo:flipping_2} perturbing a total amount of $5\%$ active sources) is vastly superior to the other variants. Not only is it the fastest variant, but it also has the best solution quality: it has the largest or a very large min\_count score and very small average relative error in the instances were it does not produce the best minimizer. Going more into the details, it is very interesting to inspect the last part of Table \ref{table:Fourth_experiment_table_MINLPpaper}, i.e., av\_subsolvercalls. We observe that for both variants $1$ and $4$ the respective strategy is not actively finding better iterates since the number of calls to the local solver are close to the $\pmax = 1000$ iterations of Algorithm \ref{algo:Perturbation} that are required to terminate the overall MOR-tIPA. This strengthens the intuition we already mentioned in Section \ref{sec:4_2_MINLPpaper} that these strategies are flipping too many sources such that the resulting perturbations are useless initial guesses for the local solver (in the sense that they do not lead to better iterates of the overall MIPDECO problem).
With strategies V$3$ and V$2$ it can then be seen that more subsolvercalls are made on average indicating that the perturbation strategy is actively finding better iterates inside the MOR-tIPA leading to better overall solutions of the MIPDECO problem.

Finally, to put the results of this experiment into a better perspective, Figure \ref{fig:Fourth_experiment_figure_MINLPpaper}  contains, for each part of the test set, a Box-Plot related to the objective function of the final solutions attained by each algorithm (i.e., for each value of S the test set contains $10$ instances, such that for each algorithm a Box-Plot is created for the $10$ objective function values related to the solutions we  found).
A Box-Plot consists of several parts: the lower and upper end of box represent the 25th and the 75th percentile of the data vector represented in the respective Box-Plot, the red line inside the box depicts the median of the data and the black dashed lines extending the box  are the so called whiskers which represent the remaining data points that
are not considered outliers. The outliers are then depicted as red crosses.

\begin{figure}[ht]
	\centering
	\includegraphics[width=\textwidth, height = 0.4\textheight]{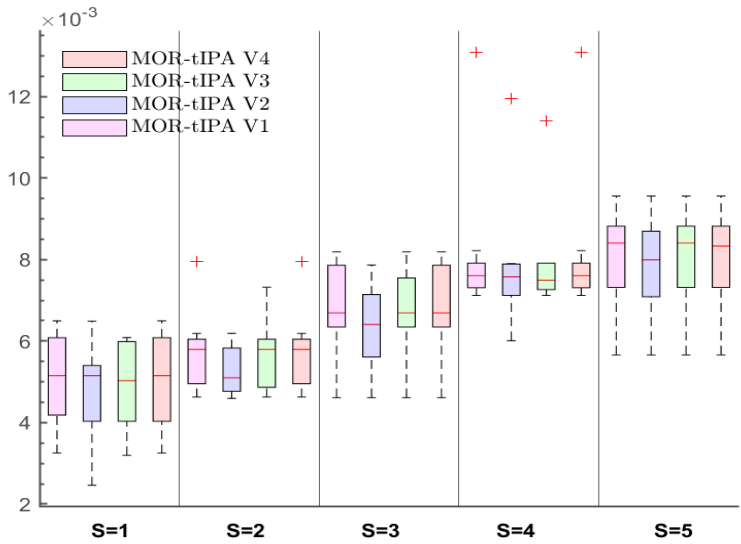}
	\caption{Results of the fourth experiment: for each part of the test set, a Box-Plot related to the objective function of the final solutions obtained by each algorithm is depicted.}
	\label{fig:Fourth_experiment_figure_MINLPpaper} 
\end{figure}

\noindent Besides showing the absolute values and thus the quality of the points obtained with the MOR-tIPA, the results of Figure \ref{fig:Fourth_experiment_figure_MINLPpaper} further strengthen the observations made in Table \ref{table:Fourth_experiment_table_MINLPpaper}: variant $2$ either achieves the smallest median (for $S=2$, $S=3$, and $S=5$) or found significantly better solutions lying in the lower whiskers than the other variants (for $S=1$ and $S=4$).

\paragraph{Fifth experiment}

In our final experiment, we want to compare the MOR-tIPA, the tIPA, and \cplexmiqp, the \BnB routine of CPLEX to verify that the MOR-tIPA is indeed the best algorithm for the MIPDECO problems tackled in this article. The experiment is carried out for the Poisson as well as the convection-diffusion problem.
We create one problem instance for each $S \in\{1,2,3,4,5\}$ (where we described in the third  experiment how such a problem instance is created) and solve it with \cplexmiqp given a time limit of $50$ hours,
as well as the tIPA and the MOR-tIPA, where both algorithms use the perturbation strategy used in variant 2 from the previous experiment.
Concerning the computational times of the tIPA, we employ a timelimit of $50$ hours to keep a fair comparison with \cplexmiqp.

Regarding the solution quality, the algorithm with the lowest objective function value is indicated with a '$\min$' in Table \ref{table:Fifth_experiment_table_MINLPpaper} 
and for each other algorithm the relative error with respect to this  objective function value is then displayed. Furthermore, Table \ref{table:Fifth_experiment_table_MINLPpaper} contains the running times in hours for each algorithm in each instance, where 'TL' indicates that the time limit was reached by the given algorithm.  

\begin{table}[ht]
\centering
\resizebox{\textwidth}{!}{%
\begin{tabular}{c|c c|c c|c c|c c|c c}
\toprule

\multicolumn{11}{c}{Poisson problem}\\
\hline
\multicolumn{1}{c}{S} & \multicolumn{2}{|c}{1} & \multicolumn{2}{|c}{2} & \multicolumn{2}{|c}{3} & \multicolumn{2}{|c}{4} & \multicolumn{2}{|c}{5}\\
\hline
& time (h) & rel\_err ($\%$) & time (h) & rel\_err ($\%$) & time (h) & rel\_err ($\%$) & time (h) & rel\_err ($\%$) & time (h) & rel\_err ($\%$)\\
\hline
\cplexmiqp & \bf{TL} & 2266.13 & \bf{TL} & 4971.64 & \bf{TL} & 12495.90 & \bf{TL} & 23032.98 & \bf{TL} & 30936.74 \\
tIPA & 43.58 & $\mathrm{\bf{min}}$ & 31.18 & $\mathrm{\bf{min}}$ & 40.51 & $\mathrm{\bf{min}}$ & 25.85 & 21.20 & 26.01 & $\mathrm{\bf{min}}$ \\ 
MOR-tIPA & 1.86 & 13.45 & 1.65 & 11.05 & 1.93 & 13.25 & 1.55 & $\mathrm{\bf{min}}$ & 1.95 & 3.70 \\    
\multicolumn{11}{c}{ }\\                   
\multicolumn{11}{c}{Convection-diffusion problem}\\
\hline
\multicolumn{1}{c}{S} & \multicolumn{2}{|c}{1} & \multicolumn{2}{|c}{2} & \multicolumn{2}{|c}{3} & \multicolumn{2}{|c}{4} & \multicolumn{2}{|c}{5}\\
\hline
& time (h) & rel\_err ($\%$) & time (h) & rel\_err ($\%$) & time (h) & rel\_err ($\%$) & time (h) & rel\_err ($\%$) & time (h) & rel\_err ($\%$)\\
\hline
\cplexmiqp & \bf{TL} & 1934.08 & \bf{TL} & 7671.52 & \bf{TL} & 5366.21 & \bf{TL} & 3626.17 & \bf{TL} & 3743.28 \\
tIPA & 33.14 & $\mathrm{\bf{min}}$ & 37.71 & $\mathrm{\bf{min}}$ & 39.71 & 4.87 & 33.22 & 4.84 & \bf{TL} & $\mathrm{\bf{min}}$ \\ 
MOR-tIPA & 8.47 & 75.17 & 10.42 & 0.04 & 7.15 & $\mathrm{\bf{min}}$ & 6.28 & $\mathrm{\bf{min}}$ & 10.41 & $\mathrm{\bf{min}}$ \\
\bottomrule
\end{tabular}
}
\caption{Results of the fifth experiment. For each problem instance the algorithm with the lowest objective function value is indicated. The respective relative error of other algorithms as well as the solution times are furthermore reported.}
\label{table:Fifth_experiment_table_MINLPpaper} 
\end{table}

\noindent 
Focusing on this test, we may conclude that the MOR-tIPA and the tIPA do find equally good solutions and the MOR approach does not severely deteriorate the solution quality. Moreover, the MOR-tIPA definitely outperforms the  tIPA in terms of computational time. Finally, results clearly show that \cplexmiqp is not able to find a good solution  to the large-scale problems tackled in this article in the prescribed (although large) amount of time.

\section{Conclusion \& Outlook}
\label{sec:6_MINLPpaper}
A standard MIPDECO problem with a linear time-dependent PDE constraint and a modelled control was presented and discretized. An improved penalty algorithm (IPA), developed by the authors in a previous work, was suitably adapted to the time-dependent setting, where the core of the IPA is an efficient local optimization solver paired with a probabilistic basin hopping strategy as well as an updating tool for the penalty parameter.
In order to handle the large-scale context of the time-dependent PDE constraint, we introduced a combination of an interior point method (IPM), model order reduction (MOR), and preconditioning resulting in the MOR-IPM. Integrating the MOR-IPM in the time-dependent IPA framework yielded the MOR-tIPA for the solution of the overall MIPDECO problem, which represents the main novelty of this work.

A thorough numerical investigation, dealing with a Poisson as well as a convection-diffusion problem, showed the efficiency of the model order reduction, revealed a promising perturbation strategy inside the IPA framework, and highlighted how efficiently the MOR-tIPA provides significant solutions for the difficult MIPDECO problems considered in this article (and how much \cplexmiqp, the \BnB routine of CPLEX, struggles).

On the contrary, the numerical investigation also revealed that the developed preconditioner leaves room for improvement and this will have to be considered in future work. Besides this, the next step is the development of an IPA framework (and especially an efficient local solver) for time-dependent nonlinear problems, where devising an effective model order reduction will certainly be a challenging task.

\section*{Acknowledgement}
D. Garmatter and M. Stoll acknowledge the financial support by the Federal Ministry of Education and Research of Germany (support code 05M18OCB). 
D. Garmatter thanks Dr. Jens Saak for vital discussions on the generalized balanced truncation and the M.E.S.S. toolbox. M. Porcelli is member of the INdAM Research Group GNCS
and this work was partially supported by INdAM-GNCS under Progetti di Ricerca 2020-2021.

\section*{Data availability} The data that support the findings of this study are available from the corresponding author upon request.

\overfullrule=0pt

\begin{thebibliography}{10}

\bibitem{CPLEX}
{\em {IBM} {ILOG} {CPLEX}}.
\newblock https://www.ibm.com/analytics/cplex-optimizer.

\bibitem{antil2011domain}
{\sc H.~Antil, M.~Heinkenschloss, and R.~H. Hoppe}, {\em Domain decomposition
  and balanced truncation model reduction for shape optimization of the stokes
  system}, Optimization Methods and Software, 26 (2011), pp.~643--669.

\bibitem{antil2010domain}
{\sc H.~Antil, M.~Heinkenschloss, R.~H. Hoppe, and D.~C. Sorensen}, {\em Domain
  decomposition and model reduction for the numerical solution of {PDE}
  constrained optimization problems with localized optimization variables},
  Computing and Visualization in Science, 13 (2010), pp.~249--264.

\bibitem{antoulas2005approximation}
{\sc A.~C. Antoulas}, {\em Approximation of large-scale dynamical systems},
  SIAM, 2005.

\bibitem{badia2006balanced}
{\sc J.~M. Bad{\i}a, P.~Benner, R.~Mayo, E.~S. Quintana-Ort{\i},
  G.~Quintana-Ort{\i}, and A.~Rem{\'o}n}, {\em Balanced truncation model
  reduction of large and sparse generalized linear systems}, Chemnitz
  Scientific Computing Preprints,  (2006), pp.~06--04.

\bibitem{bellavia1998inexact}
{\sc S.~Bellavia}, {\em Inexact interior-point method}, Journal of Optimization
  Theory and Applications, 96 (1998), pp.~109--121.

\bibitem{BnB_Masterpaper}
{\sc P.~Belotti, C.~Kirches, S.~Leyffer, J.~Linderoth, J.~Luedtke, and
  A.~Mahajan}, {\em Mixed-integer nonlinear optimization}, Acta Numerica, 22
  (2013), pp.~1--131.

\bibitem{benner2014model}
{\sc P.~Benner, E.~Sachs, and S.~Volkwein}, {\em Model order reduction for
  {PDE} constrained optimization}, Trends in PDE constrained optimization,
  (2014), pp.~303--326.

\bibitem{brooks1982streamline}
{\sc A.~N. Brooks and T.~J. Hughes}, {\em Streamline upwind/{P}etrov-{G}alerkin
  formulations for convection dominated flows with particular emphasis on the
  incompressible {N}avier-{S}tokes equations}, Computer methods in applied
  mechanics and engineering, 32 (1982), pp.~199--259.

\bibitem{de2011balanced}
{\sc J.~C. De~Los~Reyes and T.~Stykel}, {\em A balanced truncation-based
  strategy for optimal control of evolution problems}, Optimization Methods and
  Software, 26 (2011), pp.~671--692.

\bibitem{DH14b}
{\sc M.~A. Dihlmann and B.~Haasdonk}, {\em Certified {PDE}-constrained
  parameter optimization using reduced basis surrogate models for evolution
  problems}, Computational Optimization and Applications, 60 (2015),
  pp.~753--787.

\bibitem{elman2015preconditioning}
{\sc H.~C. Elman and V.~Forstall}, {\em Preconditioning techniques for reduced
  basis methods for parameterized elliptic partial differential equations},
  SIAM Journal on Scientific Computing, 37 (2015), pp.~S177--S194.

\bibitem{elman2007algorithm}
{\sc H.~C. Elman, A.~Ramage, and D.~J. Silvester}, {\em Algorithm 866: {IFISS},
  a {M}atlab toolbox for modelling incompressible flow}, ACM Transactions on
  Mathematical Software (TOMS), 33 (2007), pp.~14--es.

\bibitem{freya2019pod}
{\sc B.~Freya, B.~Dennis, L.~Jianjie, and V.~Stefan}, {\em {POD}-based
  mixed-integer optimal control of the heat equation}, Journal of Scientific
  Computing, 81 (2019), pp.~48--75.

\bibitem{FUNKE2014658}
{\sc S.~Funke, P.~Farrell, and M.~Piggott}, {\em Tidal turbine array
  optimisation using the adjoint approach}, Renewable Energy, 63 (2014),
  pp.~658 -- 673.

\bibitem{garmatter2019improved}
{\sc D.~Garmatter, M.~Porcelli, F.~Rinaldi, and M.~Stoll}, {\em Improved
  {P}enalty {A}lgorithm for {M}ixed {I}nteger {PDE} {C}onstrained
  {O}ptimization ({MIPDECO}) problems}, arXiv preprint arXiv:1907.06462,
  (2021).

\bibitem{Gondzio_2012}
{\sc J.~Gondzio}, {\em Interior point methods 25 years later}, European Journal
  of Operational Research, 218 (2012), pp.~587--601.

\bibitem{Goettlich2019}
{\sc S.~G{\"o}ttlich, A.~Potschka, and C.~Teuber}, {\em A partial outer
  convexification approach to control transmission lines}, Computational
  Optimization and Applications, 72 (2019), pp.~431--456.

\bibitem{grosso2007population}
{\sc A.~Grosso, M.~Locatelli, and F.~Schoen}, {\em A population-based approach
  for hard global optimization problems based on dissimilarity measures},
  Mathematical Programming, 110 (2007), pp.~373--404.

\bibitem{gubisch2017proper}
{\sc M.~Gubisch and S.~Volkwein}, {\em Proper orthogonal decomposition for
  linear-quadratic optimal control}, Model reduction and approximation: theory
  and algorithms, 5 (2017), p.~66.

\bibitem{hahn2017mixed}
{\sc M.~Hahn, S.~Leyffer, and V.~M. Zavala}, {\em Mixed-{I}nteger
  {PDE}-{C}onstrained {O}ptimal {C}ontrol of {G}as {N}etworks}, 02 2017.
\newblock Argonne National Laboratory, MCS Division Preprint
  ANL/MCS-P9040-0218.

\bibitem{larson2019method}
{\sc J.~Larson, S.~Leyffer, P.~Palkar, and S.~M. Wild}, {\em A method for
  convex black-box integer global optimization}, Journal of Global
  Optimization,  (2021), pp.~1--39.

\bibitem{leary2000global}
{\sc R.~H. Leary}, {\em Global optimization on funneling landscapes}, J. Global
  Optim., 18 (2000), pp.~367--383.

\bibitem{OCBookHinze}
{\sc G.~Leugering, S.~Engell, A.~Griewank, M.~Hinze, R.~Rannacher, V.~Schulz,
  M.~Ulbrich, and S.~Ulbrich}, {\em Constrained optimization and optimal
  control for partial differential equations}, vol.~160, Springer Science \&
  Business Media, 2012.

\bibitem{LeyfferSUR}
{\sc S.~Leyffer, P.~Manns, and M.~Winckler}, {\em Convergence of sum-up
  rounding schemes for cloaking problems governed by the {H}elmholtz equation},
  Computational Optimization and Applications, 79 (2021), pp.~193--221.

\bibitem{Lucidi_2010}
{\sc S.~Lucidi and F.~Rinaldi}, {\em Exact penalty functions for nonlinear
  integer programming problems}, Journal of optimization theory and
  applications, 145 (2010), pp.~479--488.

\bibitem{Lucidi_2011}
\leavevmode\vrule height 2pt depth -1.6pt width 23pt, {\em An exact penalty
  global optimization approach for mixed-integer programming problems},
  Optimization Letters, 7 (2013), pp.~297--307.

\bibitem{manns2018multi}
{\sc P.~Manns and C.~Kirches}, {\em Multi-dimensional {S}um-{U}p {R}ounding for
  {E}lliptic {C}ontrol {S}ystems}, SIAM Journal on Numerical Analysis, 58
  (2020), pp.~3427--3447.

\bibitem{nocedalwright1999numericalopt}
{\sc J.~Nocedal and S.~J. Wright}, eds., {\em Numerical {O}ptimization},
  Springer-Verlag, 1999.

\bibitem{pearson2020interior}
{\sc J.~W. Pearson, M.~Porcelli, and M.~Stoll}, {\em Interior-point methods and
  preconditioning for {PDE}-constrained optimization problems involving
  sparsity terms}, Numerical Linear Algebra with Applications, 27 (2020),
  p.~e2276.

\bibitem{pearson2012regularization}
{\sc J.~W. Pearson, M.~Stoll, and A.~J. Wathen}, {\em Regularization-robust
  preconditioners for time-dependent {PDE}-constrained optimization problems},
  SIAM Journal on Matrix Analysis and Applications, 33 (2012), pp.~1126--1152.

\bibitem{Schewe_Martin_2015}
{\sc M.~E. Pfetsch, A.~F{\"u}genschuh, B.~Gei{\ss}ler, N.~Gei{\ss}ler,
  R.~Gollmer, B.~Hiller, J.~Humpola, T.~Koch, T.~Lehmann, A.~Martin, et~al.},
  {\em Validation of nominations in gas network optimization: models, methods,
  and solutions}, Optimization Methods and Software, 30 (2015), pp.~15--53.

\bibitem{Rinaldi_2009}
{\sc F.~Rinaldi}, {\em New results on the equivalence between zero-one
  programming and continuous concave programming}, Optimization Letters, 3
  (2009), pp.~377--386.

\bibitem{gmres}
{\sc Y.~Saad and M.~H. Schultz}, {\em G{MRES}: a generalized minimal residual
  algorithm for solving nonsymmetric linear systems}, SIAM Journal on
  Scientific and Statistical Computing, 7 (1986), pp.~856--869.

\bibitem{saak2009efficient}
{\sc J.~Saak}, {\em Efficient numerical solution of large scale algebraic
  matrix equations in {PDE} control and model order reduction}, PhD thesis,
  2009.

\bibitem{MMESS}
{\sc J.~Saak, M.~K\"{o}hler, and P.~Benner}, {\em {M-M.E.S.S.}-2.1 -- the
  matrix equations sparse solvers library}, Apr. 2021.
\newblock see also: \url{https://www.mpi-magdeburg.mpg.de/projects/mess}.

\bibitem{LeyfferINV}
{\sc M.~Sharma, M.~Hahn, S.~Leyffer, L.~Ruthotto, and B.~van Bloemen~Waanders},
  {\em Inversion of convection--diffusion equation with discrete sources},
  Optimization and Engineering,  (2020), pp.~1--39.

\bibitem{singh2020preconditioned}
{\sc N.~P. Singh and K.~Ahuja}, {\em Preconditioned linear solves for
  parametric model order reduction}, International Journal of Computer
  Mathematics, 97 (2020), pp.~1484--1502.

\bibitem{troltzsch2010optimal}
{\sc F.~Tr{\"o}ltzsch}, {\em Optimal control of partial differential equations:
  theory, methods, and applications}, vol.~112, American Mathematical Soc.,
  2010.

\bibitem{wesselhoeft2017mixed}
{\sc C.~Wesselhoeft}, {\em Mixed-{I}nteger {PDE}-{C}onstrained {O}ptimization},
  Master's thesis, Imperial College London, 2017.

\bibitem{Zhang2014}
{\sc P.~Y. Zhang, D.~A. Romero, J.~C. Beck, and C.~H. Amon}, {\em Solving wind
  farm layout optimization with mixed integer programs and constraint
  programs}, EURO Journal on Computational Optimization, 2 (2014),
  pp.~195--219.

\end{thebibliography}

\end{document}